\documentclass{article}
\usepackage[utf8]{inputenc}

\usepackage{amssymb,amsmath,amsthm}
\usepackage{graphicx}
\usepackage{hyperref}

\theoremstyle{plain}
\newtheorem{theorem}{Theorem}[section]

\newtheorem{corollary}[theorem]{Corollary}

\newtheorem{conjecture}[theorem]{Conjecture}
\newtheorem{proposition}[theorem]{Proposition}

\newtheorem{problem}[theorem]{Problem}
\theoremstyle{definition}

\newtheorem{claim}[theorem]{Claim}

\newtheoremstyle{TheoremNum}
        {\topsep}{\topsep}              
        {\itshape}                      
        {}                              
        {\bfseries}                     
        {.}                             
        { }                             
        {\thmname{#1}\thmnote{ \bfseries #3}}
    \theoremstyle{TheoremNum}
    \newtheorem{thmn}{Theorem}

\newcommand\cF{{\mathcal F}}
\newcommand\cG{{\mathcal G}}

\if11     
\usepackage[mathlines]{lineno}
\newcommand*\patchAmsMathEnvironmentForLineno[1]{%
  \expandafter\let\csname old#1\expandafter\endcsname\csname #1\endcsname
  \expandafter\let\csname oldend#1\expandafter\endcsname\csname end#1\endcsname
  \renewenvironment{#1}%
     {\linenomath\csname old#1\endcsname}%
     {\csname oldend#1\endcsname\endlinenomath}}%
\newcommand*\patchBothAmsMathEnvironmentsForLineno[1]{%
  \patchAmsMathEnvironmentForLineno{#1}%
  \patchAmsMathEnvironmentForLineno{#1*}}%
\AtBeginDocument{%
\patchBothAmsMathEnvironmentsForLineno{equation}%
\patchBothAmsMathEnvironmentsForLineno{align}%
\patchBothAmsMathEnvironmentsForLineno{flalign}%
\patchBothAmsMathEnvironmentsForLineno{alignat}%
\patchBothAmsMathEnvironmentsForLineno{gather}%
\patchBothAmsMathEnvironmentsForLineno{multline}%
}

\textheight=8in \textwidth=6.5in \topmargin=0.3in \oddsidemargin=0in
\evensidemargin=0in

\title{Forbidden subposet problems in the grid}
\author{D\'aniel Gerbner$^1$ \ \ D\'aniel T. Nagy$^1$ \ \ Bal\'azs Patk\'os$^{1,2}$ \ \ M\'at\'e Vizer$^1$ \\ \small $^1$ Alfr\'ed R\'enyi Institute of Mathematics\\ \small $^2$ Moscow Institute of Physics and Technology
}
\date{}

\begin{document}

\maketitle

\begin{abstract}
    For posets $P$ and $Q$, extremal and saturation problems about weak and strong $P$-free subposets of $Q$ have been studied mostly in the case $Q$ is  the Boolean poset $Q_n$, the poset of all subsets of an $n$-element set ordered by inclusion. In this paper, we study some instances of the problem with $Q$ being the grid, and its connections to the Boolean case and to the forbidden submatrix problem.
\end{abstract}

\section{Introduction}
In extremal combinatorics, Tur\'an type problems ask for the largest size that a combinatorial object can have if it is does not contain a prescribed forbidden substructure. Graphs with the most number of edges not containing a fixed subgraph, set systems with the most number of sets not containing two with prescribed intersection size, etc. In this flavor, the forbidden subposet problem for posets $P$ and $Q$ asks for the size of the largest subset of $Q$ that does not contain $P$ as a subposet. There exist two notions of a subposet: we say that $P$ is a \textit{weak subposet} of $R$ if there exists an injection $i:P\rightarrow R$ such that for every $p,p'\in P$, $p\leqslant_P p'$ implies $i(p)\leqslant_R i(p')$. If in addition the injection $i$ satisfies $p\leqslant_P p'$ if and only if $i(p) \leqslant_R i(p')$, then we say that $P$ is a \textit{strong subposet} of $R$. Otherwise we say that $R$ is \textit{weak / strong $P$-free}. Strong subposets are also called \textit{induced} subposets in the literature, while weak subposets are often referred to as subposets, and sometimes as \textit{not necessarily induced} subposets. The extremal numbers $La(Q,P)$ and $La^*(Q,P)$ are defined as the size of the largest weak / strong $P$-free subposet of $Q$. Most of the research, initiated by Katona and Tarj\'an \cite{KT} in the early eighties, focused on the case $Q=Q_n$ the Boolean cube poset $\{0,1\}^n$, i.e., the poset of all subsets of an $n$-element set ordered by inclusion. As it is usual in the literature, we use the notation $La(n,P)$ and $La^*(n,P)$ instead of $La(Q_n,P)$ and $La^*(Q_n,P)$. For a survey on the topic see \cite{GL} and an even more recent summary is Chapter 7 of \cite{GP}. 

One of the most used tools in addressing forbidden subposet problems in the case of the Boolean cube is to find a simpler poset structure $R$ in the cube and apply some averaging argument to the result obtained for $R$. Simpler structures include the chain, the double chain, complementary chain pairs, intervals of a cycle, etc., see \cite{BN,DKS,GP2}. In this note, we focus on the grid $[k]^d=\{1,2,\dots,k\}^d$ ordered coordinate-wise. Elements will be denoted by lower case letters $a,b,x,y,\dots$ etc., and the $i$th ($1 \le i \le d$) coordinate by $a_i,b_i,x_i,y_i, \ldots$ etc. The order ($\leqslant$) on $[k]^d$ is defined as follows: for $x,y \in [k]^d$ we have that  $x\leqslant y$ if and only if $x_i\le y_i$ for all $i=1,2,\dots,d$. Let the \textit{rank} $r(x)$ of an element $x \in [k]^d$ be defined as $\sum_{i=1}^dx_i$. The set of all elements of rank $r$ ($d \le r \le kd$) will be denoted by $S_{k,d,r}$ and we write $s_{k,d,r}=|S_{k,d,r}|$. It is well-known \cite{And} that the size $w_{k,d}$ of the largest antichain in $[k]^d$ is $s_{k,d,\lfloor (k+1)d/2\rfloor}=(1+o(1))\sqrt{\frac{6}{\pi d}} k^{d-1}$ as $\min \{k,d\} \rightarrow \infty$. Therefore, there exist constants $C=C_d$ and $\varepsilon=\varepsilon_{d}$ such that if $|i-\frac{(k+1)d}{2}|<\varepsilon k$, then $s_{k,d,i}\ge Ck^{d-1}$. 

Connection between forbidden subposet problems for the Boolean poset and some extremal problems on the grid was first established by Methuku and P\'alv\"olgyi \cite{MP}. The \textit{dimension} of a poset $P$ is the smallest number $d$ for which there exist $d$ permutations $\pi_1,\pi_2,\dots,\pi_d$ of the elements of $P$ such that $p<_Pq$ if and only if $\pi_i(p)<\pi_i(q)$ for all $i=1,2,\dots,d$. This is clearly equivalent to the fact that $d$ is the smallest integer for which $[|P|]^d$ contains a strong copy of $P$. Methuku and P\'alv\"olgyi showed - by embedding any poset $P$ of the Boolean cube to some grid - that the forbidden subposet problem is naturally connected to the forbidden sub(hyper)matrix problem for the permutation hypermatrix defined by the $\pi_i$s. Applying a Marcus-Tardos-type theorem for hypermatrices \cite{KM,MT}, they proved that for any poset $P$ there exists a constant $c_P$ such that $La^*(n,P)\le c_P\binom{n}{\lfloor n/2\rfloor}$ (the analogous statement for weak subposets follows trivially from a result of Erd\H os \cite{E}).
Their result was strengthened by M\'eroueh \cite{M} and Tomon \cite{T}.

Forbidden subposet problems on the grid and their connection to the case of the Boolean cube was first studied by Tomon \cite{T2} and Sudakov, Tomon, Wagner \cite{STW}. In \cite{STW}, the following general framework was introduced. We say that a formula is \textit{affine}, if it is built from variables, the lattice
operators $\wedge$ and $\vee$ (or to avoid confusion, one might prefer to use $\cup$ and $\cap$), and parentheses (,) (constants are not allowed, e.g.
$x\cap \{1, 2, 3\}$ is not an affine formula). Also, an \textit{affine statement} is a statement of the form
$f < g$ or $f = g$, where $f$ and $g$ are affine formulas. Finally, an \textit{affine configuration} is a Boolean
expression, which uses symbols $\wedge,\vee,\neg$ and whose variables are replaced with affine statements.
Given an affine configuration $C$ with $k$ variables, a lattice $L$
contains $C$, if there exists $k$
distinct elements of $L$ that satisfy $C$, otherwise, say that $L$ avoids $C$. Let $ex(L, C)$ denote the
size of the largest subposet $L'$ of $L$
such that $L'$ avoids $C$. 

Note that in the Boolean cube $\{0,1\}^n$, the lattice operators are $\cup$ and $\cap$ and $<$ is simply $\subsetneq$. Given a poset $P$ with relation $\prec$, the weak and strong $P$-free properties can be described with the following affine configurations:
$$
C_P:=\bigwedge_{p,q\in P, p\prec q}(p<q) \hskip 2truecm C^*_P:=\bigwedge_{p,q\in P, p\prec q}(p<q)\hskip 0.3truecm \wedge \bigwedge_{p,q\in P, p\not\prec q, q\not\prec p}(\neg (p<q) \wedge \neg (q< p)).
$$

\begin{theorem}[Theorem 3.1 in \cite{STW}]\label{svejc}
Let $d$ be a positive integer, $C$ an affine configuration and $c, \alpha > 0$ such that $ex([k]^d, C) \le ck^{d-\alpha}$ holds for
every sufficiently large $k \in \mathbb{N}$. Then we have
$$ex(2^{[n]}, C) \le (1 + o(1))c
\left(\frac{2d}{\pi n}\right)^{\alpha/2}2^n.$$
\end{theorem}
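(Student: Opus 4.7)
The plan is to randomly embed a grid $[k]^d$ into the Boolean lattice $2^{[n]}$ as a sublattice, apply the hypothesis inside each embedded copy, and average. Take $k=\lfloor\sqrt{3n/d}\rfloor$. For a uniformly random permutation $\pi\in S_n$, split $[n]$ into $d$ consecutive blocks $B_1,\dots,B_d$ of (nearly) equal size $k-1$, and for $x=(x_1,\dots,x_d)\in\{0,1,\dots,k-1\}^d$ set $\varphi_\pi(x)=\bigcup_{i=1}^d I^{(i)}_{x_i}$, where $I^{(i)}_t$ is the $\pi$-initial segment of $B_i$ of size $t$. Block by block, $\varphi_\pi$ preserves $\cup$, $\cap$, and $\subsetneq$, so it preserves every affine statement and hence the configuration $C$. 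Thus if $\cF\subseteq 2^{[n]}$ avoids $C$, then $\varphi_\pi^{-1}(\cF)\subseteq[k]^d$ does too, and $|\cF\cap\mathrm{image}(\varphi_\pi)|\le ex([k]^d,C)\le ck^{d-\alpha}$.

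A short multinomial--hypergeometric calculation, counting permutations for which $F\cap B_i$ is the $\pi$-initial segment of $B_i$ in every block, gives for $|F|=m$
$$\Pr_\pi[F\in\mathrm{image}(\varphi_\pi)] = \frac{N_m}{\binom{n}{m}}, \qquad N_m := \bigl|\{(f_1,\dots,f_d)\in\{0,\dots,k-1\}^d :\, f_1+\dots+f_d = m\}\bigr|.$$
Taking expectations in the inequality above yields the weighted LYM-type bound
$$\sum_{F\in\cF}\frac{N_{|F|}}{\binom{n}{|F|}} \le ck^{d-\alpha}.$$

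To convert this into a bound on $|\cF|$, pick $t=t(n)\to\infty$ with $t^2\ge\alpha\log n$ and set $\cF_{\mathrm{mid}} = \{F\in\cF:\, |\,|F|-n/2\,|\le t\sqrt n\}$. By Chernoff, $|\cF\setminus\cF_{\mathrm{mid}}|\le 2^{n+1}e^{-2t^2} = o\bigl((2d/(\pi n))^{\alpha/2}\,2^n\bigr)$, so the tail is negligible. Our choice $k=\lfloor\sqrt{3n/d}\rfloor$ matches the variance $dk^2/12$ of the rank distribution $N_\cdot$ with the binomial variance $n/4$, so local CLT gives $N_{|F|}/\binom{n}{|F|}=(1+o(1))N_{n/2}/\binom{n}{n/2}$ uniformly on $\cF_{\mathrm{mid}}$. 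Using $N_{n/2}\sim\sqrt{6/(\pi d)}\,k^{d-1}$ (from the introduction's formula for $w_{k,d}$) and $\binom{n}{n/2}\sim\sqrt{2/(\pi n)}\,2^n$ (Stirling), this common value equals $(1+o(1))\sqrt{3n/d}\,k^{d-1}/2^n$.

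Plugging back into the weighted LYM inequality,
$$|\cF_{\mathrm{mid}}|\le (1+o(1))\,c\,k^{1-\alpha}\sqrt{d/(3n)}\cdot 2^n = (1+o(1))\,c\,(d/(3n))^{\alpha/2}\cdot 2^n,$$
which is at most $(1+o(1))c(2d/(\pi n))^{\alpha/2}2^n$ since $\pi/6<1$; combined with the tail bound this proves the theorem. The main technical obstacle is the uniform local CLT estimate on $\cF_{\mathrm{mid}}$: the variance-matching choice of $k$ is what makes $m\mapsto N_m/\binom{n}{m}$ essentially constant across the whole middle range, up to a $(1+o(1))$ factor. Verifying this requires standard Edgeworth-type bounds, valid provided $t\sqrt n = o(n^{2/3})$, which is easily arranged with $t=\log n$.
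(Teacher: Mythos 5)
There is a fatal flaw in your choice of $k$. You set $k=\lfloor\sqrt{3n/d}\rfloor$ and split $[n]$ into $d$ blocks of size $k-1$; but $d$ blocks of size $k-1$ have total size $d(k-1)\approx\sqrt{3nd}$, which for fixed $d$ is $o(n)$, so they cannot partition $[n]$. However you repair the definition of the blocks, the image of $\varphi_\pi$ consists only of sets of size at most $d(k-1)=o(n)$, so $N_m=0$ for every $m$ in your middle range $|m-n/2|\le t\sqrt n$. The weighted LYM inequality $\sum_{F\in\cF}N_{|F|}/\binom{n}{|F|}\le ck^{d-\alpha}$ is then vacuous on $\cF_{\mathrm{mid}}$ and gives no bound whatsoever on $|\cF_{\mathrm{mid}}|$. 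The underlying problem is that you matched the variance of the grid's rank distribution to that of the binomial while ignoring the means: the rank $\sum_i f_i$ of a uniform point of $\{0,\dots,k-1\}^d$ has mean $d(k-1)/2$, and forcing this to equal $n/2$ forces $k-1\approx n/d$, whereupon the variance is $\Theta(n^2/d)$ rather than $\Theta(n)$. With this embedding one cannot match both moments, so the ratio $N_m/\binom{n}{m}$ is never essentially constant over a window of width $t\sqrt n$ around $n/2$ in the way your local CLT step requires.

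This is not a repairable slip within your framework. With the only sensible choice $k-1\approx n/d$, your argument becomes exactly the paper's proof of Theorem \ref{vissza} (the $\pi$-block averaging), and it yields $ex(2^{[n]},C)\le(1+o(1))cn^{1-\alpha}\binom{n}{\lfloor n/2\rfloor}$, which is strictly weaker than the claimed bound $(1+o(1))c(2d/(\pi n))^{\alpha/2}2^n$ precisely when $\alpha<1$; the two coincide (up to the constant $\sqrt{6/\pi}$) only at $\alpha=1$, as discussed after Corollary \ref{gridboole2}. Note also that the paper does not prove Theorem \ref{svejc} at all --- it is quoted from \cite{STW}, where the proof relies on an involved uniform chain decomposition theorem rather than a random sublattice embedding; the extra strength for $\alpha<1$ is exactly what the simple averaging argument cannot capture. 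So your method, correctly executed, proves a genuinely different and, for $\alpha<1$, weaker statement.
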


Applying Theorem \ref{svejc} with $\alpha=1$ one obtains the following (see p.17 in \cite{STW}).

\begin{corollary}\label{gridboole} \ 
Let $d$ be a fix natural number.

\smallskip 

(i) If $\limsup_{k \rightarrow \infty} La([k]^d,P)\frac{\sqrt{d}}{k^{d-1}}\le c$, then $\limsup_{n \rightarrow \infty} \frac{La(n,P)}{\binom{n}{\lfloor n/2\rfloor}}\le c$ holds.

(ii) If $\limsup_{k \rightarrow \infty} La^*([k]^d,P)\frac{\sqrt{d}}{k^{d-1}}\le c$, then $\limsup_{n \rightarrow \infty} \frac{La^*(n,P)}{\binom{n}{\lfloor n/2\rfloor}}\le c$ holds.
\end{corollary}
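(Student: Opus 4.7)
The plan is to apply Theorem \ref{svejc} essentially verbatim with $\alpha = 1$, once we observe that weak and strong $P$-freeness are exactly the affine configurations $C_P$ and $C_P^*$ defined in the excerpt. This identification gives $La([k]^d, P) = ex([k]^d, C_P)$ and $La^*([k]^d, P) = ex([k]^d, C_P^*)$, and the analogous identification holds on the Boolean cube $2^{[n]}$, so that the hypotheses of Theorem \ref{svejc} are automatically satisfied by the hypotheses of the corollary, modulo the right normalization.

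For part (i), I would unpack the $\limsup$ hypothesis: given $\varepsilon > 0$, for all sufficiently large $k$ we have $La([k]^d, P) \le (c+\varepsilon) k^{d-1}/\sqrt{d}$. Feeding this into Theorem \ref{svejc} with its constant set to $(c+\varepsilon)/\sqrt{d}$ and $\alpha = 1$ produces
$$La(n,P) \le (1+o(1))\frac{c+\varepsilon}{\sqrt{d}}\left(\frac{2d}{\pi n}\right)^{1/2} 2^n = (1+o(1))(c+\varepsilon)\sqrt{\frac{2}{\pi n}}\, 2^n.$$
The $\sqrt{d}$ in the hypothesis is precisely the factor that cancels the $\sqrt{d}$ coming from $(2d/\pi n)^{1/2}$ on the right-hand side of Theorem \ref{svejc}; this is the whole reason the corollary normalizes $La([k]^d,P)$ by $\sqrt{d}/k^{d-1}$.

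To finish, I would invoke Stirling's formula in the form $\binom{n}{\lfloor n/2\rfloor} = (1+o(1))\sqrt{\frac{2}{\pi n}}\, 2^n$, so that the display above becomes $La(n,P) \le (1+o(1))(c+\varepsilon)\binom{n}{\lfloor n/2\rfloor}$. Taking $\limsup_{n\to\infty}$ and then sending $\varepsilon \to 0$ yields (i). Part (ii) is the same argument with $C_P^*$ and $La^*$ replacing $C_P$ and $La$ throughout; nothing else changes because Theorem \ref{svejc} is stated for an arbitrary affine configuration $C$.

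I do not expect any genuine obstacle: the corollary is a bookkeeping consequence of Theorem \ref{svejc}, and the only places where care is needed are (a) handling the $\limsup$ by introducing an $\varepsilon$-slack before applying Theorem \ref{svejc} to an honest pointwise bound, and (b) matching the $\sqrt{d}$-normalization with the central binomial coefficient through Stirling. Both are routine; the substantive content of the corollary is already contained in Theorem \ref{svejc}.
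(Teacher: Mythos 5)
Your proposal is correct and matches the paper exactly: the paper derives Corollary~\ref{gridboole} precisely by applying Theorem~\ref{svejc} with $\alpha=1$ to the affine configurations $C_P$ and $C_P^*$, and the $\varepsilon$-slack plus the Stirling identification $\binom{n}{\lfloor n/2\rfloor}=(1+o(1))\sqrt{2/(\pi n)}\,2^n$ is the intended bookkeeping that cancels the $\sqrt{d}$ normalization.
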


We are going to prove a theorem that is similar in flavour to Theorem \ref{svejc}. 
However the proof of Theorem~\ref{vissza} is much simpler than that of Theorem \ref{svejc}, as the authors of \cite{STW} applied an involved chain partition theorem to obtain their result, while for us a relatively simple averaging argument would suffice.

\begin{theorem}\label{vissza}
Let $d$ be a positive integer, $C$ an affine configuration, and $c, \alpha > 0$ such that $ex([k]^d, C) \le ck^{1-\alpha}\cdot w_{k,d}$ holds for
every sufficiently large $k \in \mathbb{N}$. Then we have
$$ex(2^{[n]}, C) \le (1 + o(1))cn^{1-\alpha}
\binom{n}{\lfloor n/2\rfloor}.$$
\end{theorem}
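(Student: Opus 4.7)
The idea is a straightforward averaging argument over random chain-type lattice embeddings of $[k]^d$ into $2^{[n]}$. Fix a $C$-free family $\mathcal{F}\subseteq 2^{[n]}$ and choose $k$ so that $n=d(k-1)$ (for other $n$, one takes blocks of two slightly different sizes, absorbing the discrepancy into $1+o(1)$).

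Let $\pi$ be a uniformly random permutation of $[n]$; define the $i$-th block $B_i$ as the set of elements in positions $(i-1)(k-1)+1,\dots,i(k-1)$ under $\pi$, and let the chain within $B_i$ follow the $\pi$-order. I build an embedding $\phi_\pi:[k]^d\hookrightarrow 2^{[n]}$ by sending $(a_1,\dots,a_d)$ to the union over $i$ of the first $a_i-1$ elements of $B_i$ (so $\phi_\pi(1,\dots,1)=\emptyset$ and $\phi_\pi(k,\dots,k)=[n]$). Since the blocks $B_i$ are pairwise disjoint and initial segments within a common $B_i$ are nested, a short check gives $\phi_\pi(\vec{a}\wedge\vec{b})=\phi_\pi(\vec{a})\cap\phi_\pi(\vec{b})$ and $\phi_\pi(\vec{a}\vee\vec{b})=\phi_\pi(\vec{a})\cup\phi_\pi(\vec{b})$; in particular $\phi_\pi$ preserves every affine configuration. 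Hence $\phi_\pi^{-1}(\mathcal{F})$ is a $C$-free subset of $[k]^d$, and the hypothesis gives $|\mathcal{F}\cap\phi_\pi([k]^d)|\le c\,k^{1-\alpha}w_{k,d}$.

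For a fixed $A\subseteq[n]$ with $|A|=m$, the event $A\in\phi_\pi([k]^d)$ is precisely the event that $\pi^{-1}(A)$ is an initial segment of each block; the number of size-$m$ subsets of $[n]$ with that property equals the number of rank-$(m+d)$ vectors in $[k]^d$, namely $s_{k,d,m+d}$. Hence $\Pr[A\in\phi_\pi([k]^d)]=s_{k,d,m+d}/\binom{n}{m}$, and taking expectations gives
\[
\sum_{m=0}^{n}|\mathcal{F}_m|\,\frac{s_{k,d,m+d}}{\binom{n}{m}}\;\le\;c\,k^{1-\alpha}w_{k,d},
\]
where $\mathcal{F}_m=\mathcal{F}\cap\binom{[n]}{m}$.

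To recover $|\mathcal{F}|=\sum_m|\mathcal{F}_m|$, I would split the sum at $|m-n/2|\le n^{0.6}$. On the central range, a local central limit theorem comparison yields $s_{k,d,m+d}/\binom{n}{m}\ge(1-o(1))\,w_{k,d}/\binom{n}{\lfloor n/2\rfloor}$: the sum of $d$ i.i.d.\ uniforms on $\{0,\dots,k-1\}$ has variance of order $n^2/(12d)$, exceeding the binomial variance $n/4$ for $n>3d$, so the grid density decays more slowly than the binomial one around the common peak location $m=n/2$. This bounds the central contribution by $(1+o(1))c\,k^{1-\alpha}\binom{n}{\lfloor n/2\rfloor}$, and the tail $\sum_{|m-n/2|>n^{0.6}}|\mathcal{F}_m|\le\sum_{|m-n/2|>n^{0.6}}\binom{n}{m}$ is $o\bigl(\binom{n}{\lfloor n/2\rfloor}\bigr)$ by Chernoff. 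Since $k$ is of order $n$ for fixed $d$, combining the two pieces yields the claimed estimate. The main technical point is the central-range density comparison, which is routine via local CLT (or via log-concavity of the two sequences matching at the middle level) but requires some care with the $(1-o(1))$ factor.
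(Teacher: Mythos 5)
Your argument is correct and its core coincides with the paper's: your random block embedding $\phi_\pi$ is exactly the paper's notion of $\pi$-blocks, and your displayed inequality $\sum_m |\mathcal{F}_m|\,s_{k,d,m+d}/\binom{n}{m}\le c\,k^{1-\alpha}w_{k,d}$ is precisely the paper's LYM-type inequality (its equation~(\ref{lym})) after dividing by $n'!$. Where you genuinely diverge is in converting this inequality into a bound on $|\mathcal{F}|$. The paper proves a \emph{uniform} pointwise estimate (Claim~\ref{kisclaim}): for \emph{every} level $j$ one has $s_{k,d,j+d}/\binom{n'}{j}\ge(1-o(1))\,w_{k,d}/\binom{n'}{\lfloor n'/2\rfloor}$, established by a careful ratio analysis comparing $\binom{n'}{j}/\binom{n'}{\lfloor n'/2\rfloor}$ with $s_{k,d,j+d}/s_{k,d,\lfloor n'/2\rfloor+d}$ via the recursion $s_{k,d,i}-s_{k,d-1,i-1}\le s_{k,d,i-1}$; this lets it divide through the LYM inequality in one step. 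You instead restrict the comparison to the window $|m-n/2|\le n^{0.6}$ (where only $s_{k,d,m+d}\ge(1-o(1))w_{k,d}$ is needed, since $\binom{n}{m}\le\binom{n}{\lfloor n/2\rfloor}$ trivially) and kill the tails with the crude bound $|\mathcal{F}_m|\le\binom{n}{m}$ plus Chernoff; since the tail mass is $e^{-\Omega(n^{0.2})}\binom{n}{\lfloor n/2\rfloor}$, it is negligible against $n^{1-\alpha}\binom{n}{\lfloor n/2\rfloor}$ for any fixed $\alpha$. Your finish is arguably more robust and avoids the delicate ratio computation, at the price of invoking a local-CLT-type flatness statement for $s_{k,d,r}$ near the middle rank, which is standard for fixed $d$ but which you should actually prove (the paper's introduction only records a constant-factor lower bound $s_{k,d,i}\ge Ck^{d-1}$ near the middle, not the $(1-o(1))w_{k,d}$ form you need). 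Two further small points: your parenthetical treatment of $d\nmid n$ via unequal block sizes needs an extra observation (e.g.\ that a $C$-free family in $[k_1]\times\dots\times[k_d]$ with $k_i\le k$ is $C$-free in $[k]^d$, and that the middle level of the product is still $(1-o(1))w_{k,d}$); the paper's cleaner device is to discard $r<d$ ground elements and split $\mathcal{F}$ into $2^r$ subfamilies. And in the very last step you, like the paper, silently replace $k^{1-\alpha}$ by $n^{1-\alpha}$; this is harmless for $\alpha\le 1$ but for $\alpha>1$ costs a factor $d^{\alpha-1}$ in the constant, a point worth flagging though it is shared with the original proof.
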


Observe that the following strengthening of Corollary \ref{gridboole} is an immediate consequence of Theorem \ref{vissza} with $\alpha=1$. 

\begin{corollary}\label{gridboole2} \ 
Let $w_{k,d}$ denote the size of the largest antichain in $[k]^d$.

(i) If $\limsup_{k \rightarrow \infty} \frac{La([k]^d,P)}{w_{k,d}}\le c$, then $\limsup_{n \rightarrow \infty} \frac{La(n,P)}{\binom{n}{\lfloor n/2\rfloor}}\le c$ holds.

(ii) If $\limsup_{k \rightarrow \infty} \frac{La^*([k]^d,P)}{w_{k,d}}\le c$, then $\limsup_{n \rightarrow \infty} \frac{La^*(n,P)}{\binom{n}{\lfloor n/2\rfloor}}\le c$ holds.
\end{corollary}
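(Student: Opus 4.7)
The plan is to derive both parts immediately from Theorem \ref{vissza} specialized to $\alpha = 1$, applied to the two affine configurations $C_P$ and $C_P^*$ defined just before Theorem \ref{svejc}. The key identifications are $La([k]^d,P) = ex([k]^d,C_P)$, $La^*([k]^d,P) = ex([k]^d,C_P^*)$, and analogously in $2^{[n]}$; these hold because $C_P$ (respectively $C_P^*$) literally asserts the existence of a weak (respectively strong) embedding of $P$, and the affine formulas involve only the lattice order, which is available in both $[k]^d$ and $2^{[n]}$.

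For part (i), fix an arbitrary $\varepsilon > 0$. The limsup hypothesis means that there exists $k_0 = k_0(\varepsilon)$ such that
$$La([k]^d,P) \le (c+\varepsilon)\, w_{k,d} = (c+\varepsilon)\, k^{1-1}\, w_{k,d} \qquad \text{for every } k \ge k_0.$$
This is exactly the hypothesis of Theorem \ref{vissza} with configuration $C_P$, constant $c+\varepsilon$, and $\alpha = 1$. Applying the theorem yields
$$La(n,P) = ex(2^{[n]}, C_P) \le (1+o(1))(c+\varepsilon)\, n^{1-1} \binom{n}{\lfloor n/2 \rfloor} = (1+o(1))(c+\varepsilon) \binom{n}{\lfloor n/2 \rfloor},$$
so $\limsup_{n\to\infty} La(n,P)/\binom{n}{\lfloor n/2\rfloor} \le c+\varepsilon$. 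Since $\varepsilon$ was arbitrary, the limsup is at most $c$.

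Part (ii) is the same argument verbatim with $C_P$ replaced by $C_P^*$ throughout. The only thing to verify is that the configuration $C_P^*$ does encode strong containment in both lattices under consideration, which is immediate from its definition as a conjunction of affine statements built from $<$ and $\neg$. The main ``obstacle'' is trivial: it is just translating the limsup into a hypothesis valid for all sufficiently large $k$ with an $\varepsilon$-slack, after which Theorem \ref{vissza} does all the work and sending $\varepsilon \to 0$ removes the slack in the limit.
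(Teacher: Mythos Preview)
Your proof is correct and matches the paper's approach: the paper simply states that Corollary~\ref{gridboole2} is an immediate consequence of Theorem~\ref{vissza} with $\alpha=1$, and you have spelled out exactly that deduction, including the routine $\varepsilon$-slack needed to pass from a limsup hypothesis to the ``for all sufficiently large $k$'' hypothesis of Theorem~\ref{vissza}.
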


Also, one can compare the two theorems: Theorem \ref{svejc} is stronger as long as $\alpha< 1$ (moreover, Theorem~\ref{vissza} is meaningless if $\alpha<1/2$), while Theorem \ref{vissza} is stronger for $\alpha>1$ and they are exactly of the same strength (apart from a multiplicative constant factor $\sqrt{\frac{6}{\pi}}$) if $\alpha=1$, i.e., the case of Corollary \ref{gridboole} and \ref{gridboole2}.

\vskip 0.2truecm

Next we start investigating the forbidden subposet problem on the grid for specific posets. This topic is naturally connected to the area of forbidden sub(hyper)matrices. We say that a $0$-$1$ matrix $A$ \textit{contains} another $0$-$1$ matrix $M$ if $A$ has a submatrix $M'$ that can be turned into $M$ by replacing some (possibly zero) 1 entries with 0. Otherwise we say that $A$ \textit{avoids} $M$. For $k,l \ge 1$ and a $0$-$1$ matrix $M$ let us define $ex(k,l,M)$ as the largest number of $1$-entries in a $k\times l$ $0$-$1$ matrix $A$ that avoids $M$. (In the literature, $P$ is used for $M$ to hint at the word \textit{pattern} for the avoided matrix $M$, but since we use $P$ for posets, to avoid confusion $M$ stands for the forbidden matrix in this paper.) An overview of results on this extremal function can be found in the introduction of \cite{FK}. Now consider a poset $P$ of dimension 2 and all possible embeddings of $P$ into $[|P|]^2$. Every such embedding $f$ naturally corresponds to a $|P|\times |P|$ $0$-$1$ matrix $T$: the entry $t_{i,j}$ is 1 if and only if $f(p)=(i,j)$ for some $p\in P$. Let $M_f$ denote the submatrix of $T$ of all rows and columns that contain at least one 1-entry. A subposet $Q$ of $[k]^2$ again naturally corresponds to the $k\times k$ $0$-$1$ matrix $M_Q$: its $(i,j)$-entry is 1 if and only if $(i,j)\in Q$. Then clearly,  $Q$ is strong $P$-free if and only if $M_Q$ avoids $M_f$ for all embeddings $f$. This means that every strong forbidden subposet problem on $[k]^2$ corresponds to forbidding several submatrices. The same holds for weak forbidden subposet problems, as containing a weak copy of $P$ is equivalent to containing a strong copy from the family of posets $P'$ that have the same number of elements as $P$ and that contain a weak copy of $P$.

A much studied \cite{BC,FK} permutation pattern is the $s\times s$ matrix $J_s$ that is obtained from the identity matrix by moving its last column to the beginning. Let $\vee_s$ denote the poset on $s+1$ elements $a,b_1,b_2,\dots, b_s$ with $a<b_1,b_2,\dots,b_s$. A copy of $J_s$ in an $n\times m$ matrix corresponds to one possible embedding of $\vee_{s-1}$ into
$[n]\times [m]$. However, we conjecture that the extremal and saturation numbers for $J_s$ correspond to strong extremal and saturation numbers of $\vee_s$. (See Conjecture \ref{satconj}.)

 Set $k_2=c_2=1$ and for $s\ge 3$ let us define $k_s$ to be the maximum $k$ such that $\sum_{j=2}^k j < s$. Finally, let $c_s=s-1-\sum_{j=2}^{k_s}j$.

\begin{theorem}\label{grid} \

(i) For any $s \ge 1$, we have $La([k]^2,\vee_s)=(k_s+\frac{c_s}{k_s+1}+o(1))k$.

(ii) For $k,l \ge 1$ we have  $La^*([k]\times[l],\vee_2)=k+l-1$. 

(iii) For $k,l \ge 2$ we have $La^*([k] \times [l],\vee_3)=2(k+l)-4$. 

(iv) $La^*([k]^2,\vee_s)\ge 2(s-1)k-O(s^2)$.

(v) $La^*([k]\times [l],\{\vee_2,\wedge_2\})=k+l-1$.
\end{theorem}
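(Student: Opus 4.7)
For the lower bound, any saturated chain from $(1,1)$ to $(k,l)$ in $[k] \times [l]$---for instance $\{(1,1),(2,1),\ldots,(k,1),(k,2),\ldots,(k,l)\}$---has exactly $k+l-1$ elements and, being totally ordered, contains no weak (and hence no strong) copy of $\vee_2$ or $\wedge_2$.

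For the upper bound, let $F \subseteq [k]\times[l]$ be strong-$\{\vee_2,\wedge_2\}$-free. The first step is to translate the two avoidance conditions into a structural statement. Since two incomparable elements above some $x \in F$, together with $x$, would form a strong $\vee_2$, the strict up-set $\{y \in F : y > x\}$ must be a chain for every $x$; dually, every strict down-set in $F$ is a chain. These conditions force every vertex of the Hasse diagram of $F$ to have in-degree and out-degree at most $1$, and since a Hasse diagram is acyclic, it must be a disjoint union of directed paths. Moreover, a saturated chain in $F$ connecting two comparable elements is precisely a path in this Hasse diagram, so elements lying in different Hasse components are necessarily incomparable in $F$. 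Hence $F$ decomposes as $F = F^{(1)} \sqcup \cdots \sqcup F^{(c)}$ into disjoint chains, with any two elements from different components incomparable.

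Next I exploit the two-dimensional geometry. Two incomparable elements of $[k]\times[l]$ differ strictly in both coordinates with opposite orientations, so a short case analysis applied to the minima and maxima of two components $F^{(i)},F^{(j)}$ shows that either every element of $F^{(i)}$ is strictly northwest of every element of $F^{(j)}$, or the reverse. Consequently the components admit a total NW-to-SE ordering $F^{(1)},\ldots,F^{(c)}$; under this ordering, the first-coordinate projections of the $F^{(i)}$ are pairwise disjoint intervals inside $[1,k]$, and the second-coordinate projections are pairwise disjoint intervals inside $[1,l]$.

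The conclusion is then a short count: any chain in $[k]\times[l]$ whose first- and second-coordinate spans have widths $p$ and $q$ has at most $p+q-1$ elements (at most one per rank). Summing over the $c$ components, and using that the first-coordinate widths add to at most $k$ and the second-coordinate widths to at most $l$, yields $|F| \le k+l-c \le k+l-1$, as desired. The main obstacle is the structural step decomposing $F$ into pairwise-incomparable chains: this is where both forbidden configurations are used simultaneously. Once this is in hand, the two-dimensional geometry reduces the rest of the argument to a one-line additive inequality.
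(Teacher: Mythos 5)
Your argument for part (v) is correct and follows essentially the same route as the paper's: the observation that forbidding strong copies of both $\vee_2$ and $\wedge_2$ forces $F$ to be a disjoint union of pairwise incomparable chains is exactly the paper's starting point (phrased there via the comparability graph having clique components), and your rank-count of at most $p+q-1$ elements per chain, combined with the disjointness of the first- and second-coordinate spans of the NW-to-SE ordered components, reproduces the paper's count $|F|\le k+l-h\le k+l-1$. The paper runs the upper bound through saturated families, where each component is a maximal chain in its bounding box and the size is exactly $k+l-h$, but the content is the same, and your geometric separation lemma for two incomparable chains is sound.

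The genuine gap is one of coverage: the statement is the full Theorem \ref{grid} with five parts, and your proposal proves only part (v). Parts (i)--(iv) concern the posets $\vee_s$, $\vee_2$ and $\vee_3$ individually and require different ideas that do not follow from the chain decomposition you use. In particular, (i) needs a construction with rows forming intervals of lengths $k_s$ and $k_s+1$ starting on the diagonal, plus a convexity argument counting pairs $(m,f)$ with $m$ a lowest element of its column and $m\leqslant f$; (ii) is proved by induction on $k+l$ after analyzing the maximal elements of $F\setminus\{(k,l)\}$; (iii) is a delicate minimal-counterexample argument in which a staircase of first-in-row elements is shifted to increase the coordinate sum; and (iv) is the explicit construction taking the union of the $s-1$ highest rows and $s-1$ rightmost columns. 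As a proof of the theorem as stated, the proposal is therefore incomplete, even though the one part it does treat is handled correctly.
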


For a finite poset $P$, let $h(P)$ denote its \textit{height}, i.e., the number of elements of its largest complete subposet. Let $D_k$ denote the $k$-diamond, a poset on $k+2$ elements $a<b_1,b_2,\dots,b_k,c$ with $a<b_i<c$ for all $1\le i \le k$.

\begin{proposition}\label{doublechain}
For any poset $P$, we have
$La([k]^2,P)\le (\frac{|P|+h(P)}{2}-1)k+O(1)$.

In particular,
$La([k]^2,D_2)=\frac{5}{2}k + O(1)$ and $La([k]^2,D_3)=3k+ O(1)$.
\end{proposition}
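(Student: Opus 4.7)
The plan is to adapt the double-chain averaging argument of Burcsi--Nagy from the Boolean lattice to the grid $[k]^2$. Write $p=|P|$ and $h=h(P)$, and assume $h\ge 2$ throughout (the antichain case $h=1$ is outside the intended scope of the bound). Let $\cF\subseteq[k]^2$ be weak $P$-free.

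First I would verify the chain bound: for any maximum chain $\cC$ of $[k]^2$ (a lattice path from $(1,1)$ to $(k,k)$, of length $2k-1$) one has $|\cF\cap\cC|\le p-1$, since any $p$-element chain contains $P$ as a weak subposet.

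Second, I would establish the key double-chain lemma. Given two maximum chains $\cC_1,\cC_2$ that differ at exactly one step (a single $\mathsf{RU}\leftrightarrow\mathsf{UR}$ flip), their union $\cD:=\cC_1\cup\cC_2$ has $2k$ elements, forming a shared chain of $2k-2$ elements together with two incomparable ``diamond'' elements $y,y'$ at one common rank. The lemma asserts $|\cF\cap\cD|\le p+h-2$, proved by case analysis on $|\cF\cap\{y,y'\}|$: when at most one of $y,y'$ lies in $\cF$, the intersection $\cF\cap\cD$ sits inside some $\cC_i$ and has at most $p-1$ elements; when both $y,y'\in\cF$, the chain bound on $\cC_1$ gives $|\cF\cap(\mathrm{below})|+1+|\cF\cap(\mathrm{above})|\le p-1$, so $|\cF\cap\cD|=|\cF\cap(\mathrm{below})|+2+|\cF\cap(\mathrm{above})|\le p\le p+h-2$ (the last inequality using $h\ge 2$).

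Finally I would average the double-chain lemma over all pairs of adjacent maximum chains via an LYM-type weighting. Each element $x=(i,j)\in[k]^2$ lies in exactly $\binom{i+j-2}{i-1}\binom{2k-i-j}{k-i}$ maximum chains, and the number of flippable steps in a random such chain is controlled; combining the (element, double chain) incidence count with the double-chain bound yields, after weighting by $1/|S_{k,2,r(x)}|$, an inequality of the form
\[
\sum_{x\in\cF}\frac{1}{|S_{k,2,r(x)}|}\le\frac{p+h-2}{2}.
\]
Since $|S_{k,2,r}|\le k$ for all $r$, this gives $|\cF|\le\bigl(\tfrac{p+h}{2}-1\bigr)k+O(1)$, and the cases $D_2$ and $D_3$ follow from $|D_k|=k+2,\ h(D_k)=3$. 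The principal obstacle is this final averaging step: unlike in $Q_n$, where all maximum chains through a given element are symmetric, in $[k]^2$ the count varies sharply with position, so the calibration of the LYM weight must be handled with care to turn the uniform per-double-chain bound into the desired weighted inequality.
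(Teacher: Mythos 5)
Your key lemma is too weak to imply the stated bound, no matter how carefully the averaging is calibrated. Your ``double chain'' $\cD$ is a maximal chain with a \emph{single} branch point, and the bound you actually prove for it is $|\cF\cap\cD|\le p$ (at most $p-1$ elements on the chain through $y$, plus the one extra element $y'$). But this per-$\cD$ bound is equally satisfied by the union of the $p-1$ middle levels of $[k]^2$: that family meets $\cD$ in at most one point per rank except for one extra point at the branch rank, so at most $p\le p+h-2$ points in all, yet it has $(p-1)k-O(1)$ elements. Since $(p-1)k>(\frac{p+h}{2}-1)k$ exactly when $p>h$, i.e.\ for every $P$ that is not a chain (in particular for $D_2$ and $D_3$), no argument whose only input is the per-$\cD$ bound can reach the conclusion. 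What is needed is a branch at \emph{every} rank simultaneously, which is precisely the Burcsi--Nagy \emph{infinite double chain} ($L_i$ together with a pendant $M_i$ at each level, with the bound $|P|+h(P)-2$ for the whole $2$-per-level structure). The paper realizes this inside $[k]^2$ as the union of four consecutive increasing diagonals ($x-y$ constant): the middle two diagonals form the chain of $L_i$'s and the outer two supply the $M_i$'s. These quadruples of diagonals \emph{partition} $[k]^2$ into roughly $k/2$ blocks (up to $O(1)$ leftover elements, each leftover diagonal being a chain and hence contributing $O(1)$), so no weighted averaging is needed at all.

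Two further issues. The LYM-type step you defer to the end is not merely delicate but structurally problematic: in $[k]^2$ the number of maximal chains through an element varies wildly \emph{within} a level (from $1$ at the ends of a level to exponentially many at its center), so incidence counting against maximal chains does not produce the weights $1/|S_{k,2,r(x)}|$; one would need a regular or symmetric chain decomposition instead of the set of all maximal chains. Finally, the ``in particular'' clause requires matching lower bounds for $D_2$ and $D_3$, which your sketch omits; the paper supplies explicit constructions (for $D_2$, the two levels $x+y\in\{k,k+2\}$ plus every other element of the level $x+y=k+1$; for $D_3$, the three consecutive levels $k\le x+y\le k+2$).
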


Every extremal problem has its saturation counterpart. Very recently, there has been an increased attention \cite{Be,BC,FK,G} to the saturation version of the forbidden submatrix problem. Let $sat(n,m,M)$ denote the minimum number of 1-entries of an $n\times m$ binary matrix $A$ that avoids $M$, such that any matrix $A'$ that is obtained from $A$ by changing a 0 to a 1, contains $M$. Fulek and Keszegh \cite{FK} proved that for any matrix $M$ one has $sat(n,n,M)=O(1)$ or $sat(n,n,M)=\Theta(n)$ and asked for a characterization of matrices with linear saturation number. Partial answers were given in $\cite{Be,G}$. 

We start investigating the saturation version of the forbidden subposet problem on the grid. We say that a subposet $Q'$ of $Q$ is \textit{weak/strong $P$-saturated} if it is weak/strong $P$-free, but adding any element of $Q\setminus Q'$ to $Q'$ creates a weak/strong copy of $P$. Let $sat(Q,P)$ and $sat^*(Q,P)$ denote the minimum size of a weak / strong $P$-saturated subposet of $Q$, respectively. Just as in the extremal case, the poset saturation problems on the grid are equivalent to matrix saturation problems with a family of matrices to be avoided. First we observe that in any dimension, the weak poset saturation number is \textit{always} bounded by a constant.

\begin{proposition}\label{satnni}
For any positive integers $p,d\ge 2$ and for any $p$-element poset $P$ and integer $k$ we have $sat([k]^d,P)\le \sum_{r=d}^{d+p-2}s_{k,d,r}$.
\end{proposition}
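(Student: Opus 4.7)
The plan is to construct a weak $P$-saturated subposet of $[k]^d$ inside the union $S:=\{x\in[k]^d : r(x)\le d+p-2\}$ of the bottom $p-1$ rank levels, which has size $\sum_{r=d}^{d+p-2}s_{k,d,r}$. First I would take $Q''$ to be a maximal weak $P$-free subset of $S$; this immediately gives $|Q''|\le|S|$ and the weak $P$-freeness of $Q''$. The whole content is then verifying that $Q''$ is weak $P$-saturated in the full grid $[k]^d$, i.e., that $Q''\cup\{x\}$ contains a weak $P$-copy for every $x\in[k]^d\setminus Q''$. When $x\in S$, this is immediate from the maximality of $Q''$ in $S$, so the real work is with those $x$ of rank at least $d+p-1$.

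For such an $x$, I would first pick $y\le x$ with $r(y)=d+p-2$, which is possible because $r(x)-d\ge p-1$ leaves enough room to decompose $d+p-2$ coordinate-wise below $x$. If $y\notin Q''$, maximality supplies a weak $P$-copy $\phi:P\to Q''\cup\{y\}$ with $\phi(p^*)=y$ for some $p^*\in P$. The decisive observation is that $p^*$ must be a maximal element of $P$: any $q>_P p^*$ would force $\phi(q)\in Q''$ to lie strictly above $y$ in the grid, hence its rank would exceed $d+p-2$, contradicting $Q''\subseteq S$. Because $y\le x$ and $p^*$ is maximal, the modified map $\phi'$ defined by $\phi'(p^*):=x$ and $\phi'(q):=\phi(q)$ otherwise is a weak $P$-copy in $Q''\cup\{x\}$.

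The main obstacle is the complementary case in which every rank-$(d+p-2)$ element $\le x$ already lies in $Q''$, so the direct swap with $y$ is unavailable. Here my plan is to choose such a $y$ and a saturated chain $c_0=(1,\ldots,1)<c_1<\cdots<c_{p-2}=y$ with $r(c_i)=d+i$: if all $c_i\in Q''$, then $c_0<c_1<\cdots<c_{p-2}<x$ is a chain of length $p$ in $Q''\cup\{x\}$, and every such $p$-chain contains a weak copy of every $p$-element poset (mapping $P$ into the chain through any linear extension of $P$). Otherwise I would let $c_{i^*}$ be the largest-rank chain element missing from $Q''$; the sub-chain $c_{i^*+1},\ldots,c_{p-2}\in Q''$, sitting above $c_{i^*}$ and still $\le x$, acts as an anchor that should let one choose, inside the weak $P$-copy of $Q''\cup\{c_{i^*}\}$ furnished by maximality, a representation in which $c_{i^*}$ plays a maximal role, so that swapping $c_{i^*}$ for $x$ works exactly as in the previous paragraph. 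Showing that such a favourable representation always exists is the technical point I expect to require the most care.
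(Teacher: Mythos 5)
There is a genuine gap, and it is not just the technical point you flag at the end --- the approach as stated actually fails. A maximal weak $P$-free subset $Q''$ of the bottom $p-1$ levels $S$ need not be weak $P$-saturated in $[k]^d$. Concretely, take $d=2$, $P=\vee_2$ (so $p=3$ and $S=\{(1,1),(1,2),(2,1)\}$) and $Q''=\{(1,2),(2,1)\}$. This $Q''$ is maximal weak $\vee_2$-free in $S$, since adding $(1,1)$ creates $(1,1)<(1,2),(2,1)$. But $Q''\cup\{(2,2)\}$ contains no weak $\vee_2$: no element of it lies below two others. This is exactly your ``complementary case'': both rank-$3$ elements below $x=(2,2)$ already lie in $Q''$, the only missing chain element is $c_{0}=(1,1)$, and in the unique weak $\vee_2$-copy of $Q''\cup\{(1,1)\}$ the element $(1,1)$ plays the \emph{minimal} role, so no ``favourable representation'' with $c_{i^*}$ maximal exists and the swap with $x$ is impossible. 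The defect is in the choice of $Q''$: an arbitrary maximal $P$-free subset of $S$ can be too sparse near the bottom to propagate copies of $P$ upward.

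The paper avoids this by reversing the logic: instead of fixing the bottom levels and trying to prove saturation in the whole grid, it builds a family that is saturated by construction and then proves it lives in the bottom levels. One runs the greedy algorithm over \emph{all} of $[k]^d$ with respect to an enumeration ordered by non-decreasing rank; greediness makes the outcome $\cF_\pi$ weak $P$-saturated for free. The rank ordering forces $\cF_\pi$ to be downward closed: if $y\le x_i$ were rejected, the copy of $P$ created by $y$ uses $y$ as the image of a maximal element of $P$ (everything already present has rank at most $r(y)$), and replacing $y$ by $x_i$ yields a copy of $P$ that would have blocked $x_i$ --- this is precisely the swap argument from your second paragraph, applied at insertion time rather than afterwards. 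A downward-closed weak $P$-free family contains no chain of $p$ elements (any $p$-chain is a weak copy of any $p$-element poset, as you note), so $\cF_\pi\subseteq\bigcup_{r=d}^{d+p-2}S_{k,d,r}$, giving the bound. If you want to salvage your formulation, you must replace ``any maximal weak $P$-free subset of $S$'' by this specific greedily constructed one.
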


Note that $\sum_{r=d}^{d+p-2}s_{k,d,r}$ does not change once $k\ge p$ and thus the upper bound is a constant independent of $k$.

Let us remark that the proof of Proposition \ref{satnni} stays valid for $[k_1] \times [k_2] \times \ldots \times [k_n]$, if we replace  $\sum_{r=d}^{d+p-2}s_{k,d,r}$ with the size of the $p-1$ lowest levels.

\medskip 

Based on the above mentioned result of Fulek and Keszegh, we show an analogous theorem for the strong saturation number of posets.

\begin{theorem}\label{strongsat}
For any poset $P$ with $dim(P)=2$ we either have $sat^*([k]^2,P)=O(1)$ or $sat^*([k]^2,P)=\Theta(k)$.
\end{theorem}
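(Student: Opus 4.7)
The plan is to reduce strong $P$-saturation on $[k]^2$ to matrix saturation and then adapt the Fulek--Keszegh dichotomy to finite families of forbidden matrices. Since $\dim(P)=2$, each strong embedding $f\colon P\hookrightarrow [|P|]^2$ yields a $0$-$1$ pattern matrix $M_f$, and as noted earlier in the paper, a subposet $Q\subseteq [k]^2$ is strong $P$-free iff its indicator matrix $M_Q$ avoids every matrix in the finite family $\mathcal{F}_P:=\{M_f:f\text{ a strong embedding of }P\text{ into }[|P|]^2\}$. The notion of saturation transfers verbatim: $Q$ is strong $P$-saturated iff $M_Q$ is $\mathcal{F}_P$-saturated in the matrix sense. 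Thus the theorem reduces to the claim that for every finite family $\mathcal{F}$ of $0$-$1$ matrices, $sat(k,k,\mathcal{F})\in\{O(1),\Theta(k)\}$.

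To prove this family-valued dichotomy, I would mirror the Fulek--Keszegh strategy. The easy direction: if there exists a constant-sized $\mathcal{F}$-saturated matrix $A_0$ in some $[k_0]^2$, embed $A_0$ into the top-left corner of a $k\times k$ board, fill the rest with zeros, and add at most $O(1)$ corrective $1$-entries (depending only on $\mathcal{F}$) to re-establish saturation on the padded region, giving $sat(k,k,\mathcal{F})=O(1)$. The harder direction: assume no bounded-size $\mathcal{F}$-saturated example exists, let $A\in\{0,1\}^{k\times k}$ be any $\mathcal{F}$-saturated matrix with $\sigma$ $1$-entries, and prove $\sigma=\Omega(k)$ via the key lemma: if some window of $t=\omega(1)$ consecutive rows of $A$ is $1$-free, then a bounded neighbourhood of this strip already contains a bounded-size $\mathcal{F}$-saturated matrix, contradicting the assumption. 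A row-by-row count then forces $\sigma=\Omega(k)$, completing the lower bound, while the upper bound $O(k)$ follows from taking, for instance, a staircase augmented by $O(1)$ corrective entries.

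The main obstacle I anticipate is the extraction step in the key lemma, because $\mathcal{F}_P$ must be avoided \emph{simultaneously} and the $1$-entries of $A$ witnessing saturation at different $0$-cells of the empty strip could a priori lie far apart. My plan to deal with this uses the finiteness of $\mathcal{F}_P$ (bounded by $|P|^{|P|}$) and a pigeonhole argument to single out one dominant pattern $M_f$ responsible for saturating many $0$-cells of the strip, then exploits that each $M_f$ has bounded size $|P|$ to localise the witnessing $1$-entries to a window of diameter $O(|P|)$ around a central cell. After at most $O(1)$ corrective modifications, the resulting local patch should give the desired bounded-size $\mathcal{F}$-saturated example, closing the contrapositive and hence the dichotomy.
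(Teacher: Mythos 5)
Your reduction to the saturation problem for the finite family $\mathcal{F}_P$ of pattern matrices is exactly the paper's first step, and extending the Fulek--Keszegh dichotomy to finite families of forbidden matrices is indeed what is needed. But the way you propose to run the dichotomy has genuine gaps. Your key lemma --- that an all-zero strip of $\omega(1)$ consecutive rows forces a \emph{bounded-size} saturated matrix inside a bounded neighbourhood of the strip --- fails for the reason you yourself flag and then do not overcome: a copy of a pattern $M_f$ in $A$ occupies some set of at most $|P|$ rows and columns, but these may be arbitrarily spread out (containment allows choosing any rows and columns), so the bounded size of $M_f$ gives no bound on the geometric diameter of its occurrences, and no pigeonhole over the finitely many patterns repairs this. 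The correct mechanism, which the paper takes from Fulek and Keszegh, is different: if a saturated $n_0\times n_0$ matrix has fewer than $n_0/q$ ones (where $q$ bounds the dimensions of all matrices in $\mathcal{F}_P$), then it has $q$ consecutive all-zero rows \emph{and} $q$ consecutive all-zero columns, and inserting further all-zero rows and columns inside these blocks yields, for every $n>n_0$, an $n\times n$ matrix that is still $\mathcal{F}_P$-saturated with the \emph{same} number of ones. Note also that the dichotomy concerns whether the number of $1$-entries stays bounded as $n\to\infty$, not whether a saturated matrix of bounded dimensions exists; your ``easy direction'' (pad a small example with zeros and add $O(1)$ corrective ones) is likewise unjustified, since re-saturating the padded region could in principle require $\Theta(k)$ new entries.

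Separately, you never actually establish the upper bound $sat^*([k]^2,P)=O(k)$: ``a staircase augmented by $O(1)$ corrective entries'' is not a construction one can verify for a general two-dimensional $P$. The paper gets this bound for free from the Marcus--Tardos theorem: since $\dim(P)=2$, one of the forbidden patterns in $\mathcal{F}_P$ is a permutation matrix, so every $P$-free --- in particular every $P$-saturated --- subset of $[k]^2$ has $O(k)$ elements.
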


Finally, we address the saturation problem for some specific posets.

\begin{theorem} \label{satstar} \

(i)
For any $k,l$ we have $sat^*([k]\times [l], \{\vee_2,\wedge_2\})=\max\{k,l\}$.

(ii)
For any integers $k,l$ we have $sat^*([k]\times [l],\vee_2)=La^*([k]\times [l],\vee_2)=k+l-1$.

\end{theorem}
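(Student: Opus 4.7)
Since $La^*([k]\times[l],\vee_2)=k+l-1$ is already Theorem~\ref{grid}(ii), every maximum strong $\vee_2$-free subposet is automatically strong $\vee_2$-saturated, giving $sat^*([k]\times[l],\vee_2)\le k+l-1$. A concrete witness is the staircase chain $C=\{(1,j):j\in[l]\}\cup\{(i,l):2\le i\le k\}$ of size $k+l-1$: for any $x=(a,b)\notin C$ one has $a\ge 2$ and $b\le l-1$, and then adding $x$ creates the strong $\vee_2$ configuration $(1,1)<x$ and $(1,1)<(1,b+1)$ with $x$ and $(1,b+1)$ incomparable.

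For the matching lower bound $sat^*\ge k+l-1$, the plan is to show that any strong $\vee_2$-saturated $S$ is forced to be a single tree rooted at $(k,l)$ attaining the extremal size $k+l-1$. The first step is to observe $(k,l)\in S$: as $(k,l)$ is the grid maximum, adding it to any $\vee_2$-free set merely appends one top element to every upper chain without creating branching, so $(k,l)$ cannot be absent from a saturated set. Since $(k,l)$ is then the unique maximum of $S$ and, by the $\vee_2$-free property, each element's upper set in $S$ is a chain, $S$ decomposes as a single tree rooted at $(k,l)$ in the Hasse sense. Finally, if $|S|<k+l-1$ the tree has a leaf $\ell\ne(1,1)$ below which some $y\in[k]\times[l]\setminus S$ can be placed so that the only $S$-successors of $y$ lie along $\ell$'s chain up to $(k,l)$; then $S\cup\{y\}$ remains $\vee_2$-free, contradicting saturation. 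The main technical point is locating such a $y$, which uses the submaximality of $|S|$ to guarantee a "gap" in the tree.

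The plan for (i) is parallel. The upper bound $sat^*\le\max(k,l)$ is witnessed (WLOG $k\ge l$) by $S_0=\{(i,l+1-i):1\le i\le l\}\cup\{(i,1):l+1\le i\le k\}$ of size $k$: the antidiagonal together with a column-$1$ tail. Since strong $\{\vee_2,\wedge_2\}$-freeness is equivalent to the poset being a disjoint union of pairwise-incomparable chains, $S_0$ is $\{\vee_2,\wedge_2\}$-free; saturation is checked by a case analysis on the rank $a+b$ of $x=(a,b)\notin S_0$, where $a+b<l+1$ forces $x$ below two antidiagonal elements (creating a $\vee_2$) and $a+b>l+1$ forces it above two (creating a $\wedge_2$), with the remaining cases absorbed by the column-$1$ tail.

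For the matching lower bound $sat^*\ge\max(k,l)=k$, the plan is to show that any $\{\vee_2,\wedge_2\}$-saturated $S$ meets every row of $[k]$. Supposing toward a contradiction that row $i^*$ is empty in $S$, I would exhibit some $(i^*,j^*)\notin S$ whose addition preserves $\{\vee_2,\wedge_2\}$-freeness. Writing $S$ as a disjoint union of pairwise-incomparable chains, such a $j^*$ must satisfy (a) no chain of $S$ contains elements in both columns $\le j^*$ and columns $\ge j^*+1$, and (b) the elements of $S$ lying in columns $\le j^*$ form a single chain. The main obstacle, and the technical heart of this direction, is to prove the existence of such a $j^*$; the plan is a pigeonhole argument over the horizontal strips above and below row $i^*$, using that $|S|\le k-1$ leaves enough "column gaps" to locate a separating $j^*$ respecting (a) and (b).
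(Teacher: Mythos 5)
Your upper-bound constructions are correct and essentially the ones in the paper (the staircase chain for $\vee_2$, and the antidiagonal-plus-tail for $\{\vee_2,\wedge_2\}$; the observation that a maximum strong $\vee_2$-free family is automatically saturated, so $sat^*\le La^*$, is also fine). The problem is that for both parts the lower bound --- which is where all the work lies --- is only a plan, and the plans as stated have gaps. In (ii), after correctly noting $(k,l)\in S$ and that strong $\vee_2$-freeness makes each up-set in $S$ a chain, you defer precisely the step that carries the proof: exhibiting, when $|S|<k+l-1$, an addable element $y$ whose only $S$-successors lie on one chain. Nothing in what you wrote shows why submaximality of $|S|$ produces such a $y$; a small saturated set could a priori have its ``gaps'' blocked in both coordinates. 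The paper closes this by a different and fully worked route: it shows $F\setminus\{(k,l)\}$ has at most two maximal elements (and if two, they are of the form $(k,j)$ and $(i,l)$), splits the grid accordingly, and inducts on $k+l$ to get that \emph{every} saturated set has size exactly $k+l-1$.

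In (i), your criterion for when $(i^*,j^*)$ can be added is not right: conditions (a) and (b) are phrased purely in terms of columns, but comparability to $(i^*,j^*)$ depends on both coordinates (an element in a column $\le j^*$ but a row $>i^*$ is incomparable to $(i^*,j^*)$ and is irrelevant), so these conditions are neither necessary nor sufficient for $S\cup\{(i^*,j^*)\}$ to remain a disjoint union of pairwise incomparable chains. The existence of a suitable $j^*$ is again explicitly deferred to an unexecuted pigeonhole argument. The paper avoids this entirely: for any saturated $F$ with chain components $C_1,\dots,C_h$, each $C_i$ is forced to be a \emph{maximal} chain between its endpoints, and the column- and row-projections of the $C_i$ must tile $[k]$ and $[l]$; hence $|F|=k+l-h$ exactly, and $h\le\min\{k,l\}$ (the width) gives $|F|\ge\max\{k,l\}$. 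If you want to salvage your approach, proving this exact formula is both shorter and stronger than the row-covering argument you sketch, and it yields Theorem~\ref{grid}~(v) for free.
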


\begin{theorem} \label{satvees}
Let $P$ be a poset with $dim(P)=2$ such that a strong copy of $P$ in a two dimensional grid cannot contain two neighboring points. Then $sat^*([k]\times [l],P)\ge \max\{k,l\}$.
\end{theorem}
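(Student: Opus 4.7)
I would argue by contradiction. Suppose $Q' \subseteq [k] \times [l]$ is strong $P$-saturated with $|Q'| < \max\{k,l\}$; without loss of generality $k \ge l$, so $|Q'| < k$, and by pigeonhole some row $i \in [k]$ is disjoint from $Q'$. For each $j \in [l]$, saturation produces a strong embedding $f_j : P \to Q' \cup \{(i,j)\}$ with $(i,j) \in f_j(P)$ (otherwise the copy would lie entirely in the $P$-free set $Q'$). The hypothesis on $P$ guarantees that $f_j(P)$ contains no two grid-neighbours, so in particular the four neighbours $(i \pm 1, j)$ and $(i, j \pm 1)$ of $(i,j)$ are absent from $f_j(P)$.

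The first reduction is an induction on $k+l$ together with a pair of shift operations. The row-shift $\phi$ that deletes row $i$ (pushing rows above $i$ down by one) sends $Q'$ to a strong $P$-saturated set in $[k-1] \times [l]$ of the same cardinality: each point $(i',j') \in [k-1] \times [l] \setminus \phi(Q')$ lifts through $\phi^{-1}$ to a point of $[k] \times [l] \setminus Q'$ outside row $i$, and the $P$-copy certifying its saturation avoids row $i$ (since $Q'$ does) and hence descends through $\phi$. By the inductive hypothesis $|Q'| = |\phi(Q')| \ge \max\{k-1, l\}$, which immediately closes the diagonal case $k = l$. Symmetrically, if some column of $Q'$ were empty, a column-shift plus induction would yield $|Q'| \ge \max\{k, l-1\} = k$, contradicting $|Q'| < k$; hence every column of $Q'$ is hit, giving $|Q'| \ge l$.

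It remains to handle $k > l$, where the previous bounds yield only $|Q'| \ge k-1$. Here I would use a replacement (``sliding'') argument. Let $p_j = f_j^{-1}((i,j))$, and let $R_j \subseteq [k] \times [l]$ be the set of points $(i',j')$ such that redirecting $p_j \mapsto (i',j')$ (and keeping all other values of $f_j$) still yields a strong embedding. Unwinding the comparability conditions, $R_j$ is the open axis-aligned rectangle around $(i,j)$ bounded by the nearest rows and columns used by $f_j(P) \setminus \{(i,j)\}$. If $R_j \cap Q' \ne \emptyset$ for some $j$, the modified embedding is a strong copy of $P$ inside $Q'$, contradicting $P$-freeness. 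To force such an intersection, I would pigeonhole over the role $p_j$ to fix $p_j = p^*$ for at least $\lceil l/|P| \rceil$ indices $j$, so that on this sub-collection the shapes of the embeddings $f_j$ outside $(i,j)$ become comparable. The hypothesis then guarantees that the boundaries of $R_j$ in at least one coordinate extend strictly past the immediate neighbours of $(i,j)$, and combining this with the column-coverage conclusion $|Q'| \ge l$ yields the desired intersection with $Q'$.

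The main obstacle is the last step: in principle $f_j(P) \setminus \{(i,j)\}$ could simultaneously use rows $i-1$ and $i+1$ (at columns other than $j$) and columns $j-1$ and $j+1$ (at rows other than $i$), in which case $R_j$ collapses to the single point $(i,j)$ and the sliding argument fails for this $j$. Ruling out this degeneracy uniformly in $j$ is exactly where the neighbour hypothesis on $P$ is indispensable: the presence of $(i,j)$ in $f_j(P)$ together with the ``no two neighbours'' constraint forces any image point in the adjacent rows or columns to sit at coordinate-distance at least two from $(i,j)$, and a case analysis on whether $p^*$ is minimal, maximal, or intermediate in $P$ shows that the resulting geometric constraints on $f_j(P)$ are incompatible with the degeneracy of $R_j$ for every $j$ at once. (This is precisely the obstruction that is absent for posets like $\vee_2$, which fail the hypothesis and for which the $\max\{k,l\}$ bound need not follow from a row/column argument.)
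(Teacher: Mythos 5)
There is a genuine gap. Your shift-and-induct step is fine as far as it goes: deleting an empty row is an order isomorphism onto $[k-1]\times[l]$ that preserves saturation, so you get $|Q'|\ge\max\{k-1,l\}$, which settles $k=l$ but is off by exactly one in the main case $k>l$. The induction cannot be bootstrapped past this, because the numerical bound for $[k-1]\times[l]$ is genuinely $\max\{k-1,l\}$; the statement one actually needs is structural (\emph{every} row and every column of a saturated set is non-empty), and an empty row in $[k]\times[l]$ is not inconsistent with that statement holding in $[k-1]\times[l]$, so there is nothing to contradict. The burden therefore falls entirely on your sliding argument, and that part does not work as written: your description of the redirection region $R_j$ as an open axis-aligned rectangle is incorrect for a general poset (the admissible targets must preserve all comparabilities \emph{and} incomparabilities with the rest of the embedding, and the incomparability constraints do not carve out a rectangle); the degenerate case where $R_j=\{(i,j)\}$ is acknowledged but never actually excluded; and the final claim that the column-coverage bound $|Q'|\ge l$ forces $R_j\cap Q'\ne\emptyset$ is an assertion with no argument behind it. The pigeonhole over the role of $p_j$ does not obviously buy anything either.

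You were circling the right idea --- redirect the added point onto an element of $Q'$ to produce a copy of $P$ inside $Q'$ --- but you tried to force the intersection globally instead of \emph{choosing the added point so that a specific element of $Q'$ is automatically an admissible target}. The paper's proof does exactly this in one step. Take an empty column $h$ adjacent to a non-empty column $g$, and let $a=(g,y)$ be the extremal element of $F$ in column $g$ (topmost if $g$ is to the left of $h$, bottommost if to the right); set $b=(h,y)$. Every element of $F$ compares to $a$ and to $b$ in the same way: the only grid points that could distinguish them lie in column $h$ (empty) or in column $g$ beyond $y$ (empty by the extremal choice of $y$). Saturation gives a strong copy of $P$ through $b$ in $F\cup\{b\}$, and the no-two-neighbours hypothesis guarantees $a$ is not in that copy; substituting $a$ for $b$ then yields a strong copy of $P$ inside $F$, a contradiction. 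This shows every column (and symmetrically every row) meets $F$, giving $\max\{k,l\}$ directly with no induction and no case split on $k$ versus $l$.
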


Note that a poset, that satisfies the assumption of Theorem \ref{satvees} is any $\vee_s$ with $s\ge 3$.

\section{Connection to the hypercube - the proof of Theorem \ref{vissza}}


Let us start with stating the theorem again. Recall that $w_{k,d}=s_{k,d,\lfloor (k+1)d/2\rfloor}$ is the size of the largest antichain in $[k]^d$. 

\begin{thmn}[\ref{vissza}]
Let $d$ be a positive integer, $C$ an affine configuration, and $c, \alpha > 0$ such that $ex([k]^d, C) \le ck^{1-\alpha}\cdot w_{k,d}$ holds for
every sufficiently large $k \in \mathbb{N}$. Then we have
$$ex(2^{[n]}, C) \le (1 + o(1))cn^{1-\alpha}
\binom{n}{\lfloor n/2\rfloor}.$$
\end{thmn}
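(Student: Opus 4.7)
The plan is to relate the cube to the grid by averaging the grid hypothesis over random sublattice embeddings $\phi_\pi\colon [k]^d \hookrightarrow 2^{[n]}$. The correct scale is $k-1 = \lfloor n/d \rfloor$, so that the middle level $(k-1)d/2$ of the grid and the middle level $\lfloor n/2\rfloor$ of the cube coincide up to an additive constant. Concretely, I would pick a uniformly random permutation $\pi$ of $[n]$, cut it into $d$ consecutive blocks $A_1,\ldots,A_d$ of length $k-1$, use the nested initial segments $C_i^{(0)} \subsetneq C_i^{(1)} \subsetneq \cdots \subsetneq C_i^{(k-1)} = A_i$ as chains inside each block, and set $\phi_\pi(x_1,\ldots,x_d) = \bigcup_{i=1}^d C_i^{(x_i-1)}$. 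Because the blocks are pairwise disjoint and each chain is nested, $\phi_\pi$ preserves both $\cup$ and $\cap$; hence if $\mathcal F \subseteq 2^{[n]}$ avoids the affine configuration $C$, then $\mathcal F \cap \phi_\pi([k]^d)$ avoids $C$ in $[k]^d$, and the hypothesis yields $|\mathcal F \cap \phi_\pi([k]^d)| \le c\,k^{1-\alpha}\,w_{k,d}$ for every $\pi$.

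Next, a direct computation---conditioning on the random positions of the elements of $F$ under $\pi$---gives
$$
\Pr\bigl[F \in \phi_\pi([k]^d)\bigr] \;=\; \binom{n}{|F|}^{-1} N(|F|,k-1,d),
$$
where $N(m,k-1,d)$ is the number of $d$-tuples $(y_1,\ldots,y_d) \in \{0,\ldots,k-1\}^d$ with $\sum_i y_i = m$. Summing over $F \in \mathcal F$ and applying linearity of expectation, the hypothesis turns into the averaged inequality
$$
\sum_{F \in \mathcal F}\binom{n}{|F|}^{-1} N(|F|,k-1,d) \;\le\; c\,k^{1-\alpha}\,w_{k,d}.
$$

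The main technical obstacle is establishing a uniform pointwise lower bound of the form $\binom{n}{m}^{-1} N(m,k-1,d) \ge (1-o(1))\binom{n}{\lfloor n/2\rfloor}^{-1} w_{k,d}$ for all relevant $m$. The intuition is that both $\binom{n}{m}$ and $N(\cdot,k-1,d)$ are log-concave and unimodal with common centre $\approx n/2$; the binomial coefficient has Gaussian width $\Theta(\sqrt n)$ whereas $N$ has the larger width $\Theta(k\sqrt d) = \Theta(n/\sqrt d)$ for fixed $d$, so the reciprocal of $\binom{n}{m}$ grows at least as fast as $N$ decays as $m$ moves away from the centre, and the minimum of the product is attained at $m \approx n/2$ with the claimed value. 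Feeding this lower bound into the averaged inequality and cancelling $w_{k,d}$ yields $|\mathcal F| \le (1+o(1))\,c\,k^{1-\alpha}\binom{n}{\lfloor n/2\rfloor}$; since $k^{1-\alpha} = (1+o(1))(n/d)^{1-\alpha}$ and $d$ is fixed, this gives the claimed bound $ex(2^{[n]}, C) \le (1+o(1))\,c\,n^{1-\alpha}\binom{n}{\lfloor n/2\rfloor}$, with any $d^{\alpha-1}$ factor absorbed into the multiplicative constant (and disappearing entirely in the principal case $\alpha=1$).
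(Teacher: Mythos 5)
Your argument is the paper's argument in probabilistic clothing: your expectation computation over a uniformly random permutation $\pi$, with the image $\phi_\pi([k]^d)$ given by nested initial segments of $d$ disjoint blocks of length $k-1$, is exactly the paper's double count of pairs $(F,\pi)$ where $F$ is a ``$\pi$-block,'' and your quantity $N(m,k-1,d)$ is the paper's $s_{k,d,m+d}$. The averaged inequality $\sum_{F\in\mathcal F}N(|F|,k-1,d)\binom{n}{|F|}^{-1}\le ck^{1-\alpha}w_{k,d}$ and the reduction of the theorem to the pointwise bound $N(m,k-1,d)\binom{n}{m}^{-1}\ge(1-o(1))w_{k,d}\binom{n}{\lfloor n/2\rfloor}^{-1}$ are both correct and match the paper (your folding of the remainder $n-d(k-1)$ directly into the probability computation is a harmless variant of the paper's $2^r$-subfamily decomposition).

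The gap is that this pointwise bound --- which you correctly flag as ``the main technical obstacle'' --- is where all the work lies, and you dispose of it with a Gaussian-width heuristic rather than a proof. The paper devotes a separate claim to it: near the middle it proves the ratio bound $s_{k,d,i-1}/s_{k,d,i}\ge 1-c_0/n$ via the recursion $s_{k,d,i}=s_{k,d,i-1}+s_{k,d-1,i-1}-s_{k,d-1,i-k}$ and compares this term by term against $\binom{n}{j-1}/\binom{n}{j}\ge 1+2j'/n$, while for $m$ outside a $(1-\varepsilon)n/2$ window it uses that $j!(n-j)!$ alone already exceeds $w_{k,d}\lfloor n/2\rfloor!\lceil n/2\rceil!$ because $w_{k,d}$ is only polynomial in $n$. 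Your intuition (product of two unimodal sequences with the binomial's width $\Theta(\sqrt n)$ much smaller than $N$'s width $\Theta(n/\sqrt d)$, so the minimum sits at the centre) is the right one and the claim is true, but as written it is an assertion, not an argument --- in particular it does not produce the $(1-o(1))$ constant or handle the far tails. One further small point: your closing remark that the $d^{\alpha-1}$ factor from $k\approx n/d$ can be ``absorbed into the multiplicative constant'' concedes a weaker statement than the one claimed when $\alpha>1$ (the theorem keeps the same $c$); the paper silently makes the same replacement of $k^{1-\alpha}$ by $n^{1-\alpha}$, and the discrepancy vanishes only in the principal case $\alpha=1$, so this is not a defect relative to the paper's own proof.
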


Let us mention that our proof is similar to that of Methuku and P\'alv\"olgyi \cite{MP}. The main difference is that we only consider partitions with equal (or almost equal) parts and that we optimize our calculations in order to obtain the best possible constants.

\begin{proof}[Proof of Theorem \ref{vissza}]
Let $\cF\subset 2^{[n]}$ be a family that avoids $C$. Let us write $n$ in the following form: $n=d(k-1)+r$ with $0\le r<d$ and set $n'=n-r$. We partition $\cF$ into $2^r$ subfamilies $\cF_A$ indexed with subsets of $[n]\setminus [n']$ such that $\cF_A=\{F\setminus A: F\in \cF, F\cap ([n]\setminus [n'])=A\}$. If we can prove that for every $\cF_A$ we have $|\cF_A|\le (1+o(1))cn^{1-\alpha}\binom{n'}{\lfloor \frac{n'}{2}\rfloor}$, then $|\cF|=\sum_{A}|\cF_A|\le 2^r(1+o(1))cn^{1-\alpha}\binom{n-r}{\lfloor \frac{n-r}{2}\rfloor}=(1+o(1))cn^{1-\alpha}\binom{n}{\lfloor \frac{n}{2}\rfloor}$.
 
We say that a subset $F$ of $[n']$ is a \textit{$\pi$-block} for a permutation $\pi$ of $[n']$ if for every $0\le j\le d-1$ the intersection $F\cap \{\pi(j(k-1)+1),\pi(j(k-1)+2),\dots,\pi((j+1)(k-1))\}$ is an initial segment, i.e., $\{\pi(j(k-1)+1),\pi(j(k-1)+2),\dots,\pi(j(k-1)+h)\}$ for some $h=0,1,2,\dots,k-1$ (here $h=0$ means that the intersection is empty). Observe that union and intersection of any pair of $\pi$-blocks is a $\pi$-block again. More importantly, the set of $\pi$-blocks is isomorphic to $[k]^d$. A $\pi$-block $F$ is identified with the $d$-tuple of the size of the intersection of $F$ and the initial segments - all sizes increased by one.

Let $\cG\subseteq 2^{[n']}$ be a family that avoids an affine configuration $C$. We count the pairs $(F,\pi)$ such that $F\in \cG$ is a $\pi$-block. For a fixed permutation $\pi$, the number of pairs is clearly at most $ex([k]^d,C)$, so the total number of pairs is at most $n'! \cdot ex([k]^d,C)$, which is, by the assumption, at most $(c+o(1))n'^{1-\alpha}w_{k,d} \cdot n'!$.

On the other hand, for any set $F$, the permutations $\pi$ for which $F$ is a $\pi$-block are in bijection with the set $\{(\alpha,\beta, (f_1,f_2,\dots,f_d))\}$, where $\alpha$ is an order of the elements of $F$, $\beta$ is an order of elements of $[n']\setminus F$ and $(f_1+1,f_2+1,\dots,f_d+1)$ satisfies $\sum_{i=1}^df_i=|F|$, $f_i\le k-1$ for all $i=1,2,\dots,d$. Indeed, the $f_i$s tell us how large the $i$th initial segment of $F$ is in $\pi$. Therefore, the number of such permutations for a fixed $F$ is $ s_{k,d,|F|+d}|F|!(n'-|F|)!$. We obtained
\begin{equation}\label{lym}
\sum_{F\in \cG}s_{k,d,|F|+d}|F|!(n'-|F|)!\le (c+o(1))n'^{1-\alpha}w_{k,d} \cdot n'!.
\end{equation}

\begin{claim}\label{kisclaim}
We have $s_{k,d,j+d} \cdot j!(n'-j)!\ge (1-o(1))w_{k,d}\lfloor \frac{n'}{2}\rfloor!\lceil \frac{n'}{2}\rceil!$.
\end{claim}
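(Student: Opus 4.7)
Write $t = j - n'/2$, so the claim is equivalent to
\[
\frac{s_{k,d,j+d}}{w_{k,d}} \ \geq\ (1-o(1))\cdot \frac{\binom{n'}{j}}{\binom{n'}{\lfloor n'/2\rfloor}}.
\]
Since $d$ is fixed and $n\to\infty$, we have $k\to\infty$. The driving intuition is a variance comparison: viewing $\binom{n'}{\cdot}/2^{n'}$ and $s_{k,d,\cdot}/k^d$ as unimodal probability mass functions peaked at $n'/2$, their variances are $n'/4 = d(k-1)/4$ and $d(k^2-1)/12$ respectively. Since the binomial has the smaller variance (for $k\geq 2$), it is more concentrated and decays faster away from the mode, so its ratio to the peak is dominated by the grid's.

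To prove this I would split the range of $|t|$ into three regimes. In the \emph{central} regime $|t|=O(\sqrt{k})$, one has $|t|/k=o(1)$, so by continuity of the Irwin-Hall density $f_d$ (the scaling limit of $s_{k,d,r}/k^{d-1}$) at its maximum $d/2$, the LHS equals $f_d(d/2+t/k)/f_d(d/2) + o(1) = 1 - o(1)$, while trivially the RHS is $\leq 1$. Hence LHS $\geq 1-o(1) \geq (1-o(1))\cdot$ RHS.

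In the \emph{moderate} regime $|t|=\Theta(k)$ with $|t|/k$ bounded away from $d/2$, the LHS tends to the positive constant $f_d(d/2+t/k)/f_d(d/2)$, while Stirling applied to the binomial gives RHS $\leq \exp(-2t^2/n') = \exp(-\Omega(k))$, so the inequality is comfortable. In the \emph{extreme} regime $|t|$ close to $n'/2$, both ratios vanish, but the trivial lower bound $s_{k,d,j+d}\geq 1$ gives LHS $\geq 1/w_{k,d}=\Omega(k^{-(d-1)})$, only polynomially small, whereas RHS $\leq 1/\binom{n'}{\lfloor n'/2\rfloor} = 2^{-\Omega(k)}$ is exponentially small, so LHS still dominates.

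The step I expect to be the main obstacle is justifying the asymptotic $s_{k,d,j+d}/k^{d-1} \to f_d((j+d)/k)$ with sufficient uniformity in $j$ inside the central regime: since $d$ is fixed, the classical CLT is not directly applicable and one must instead use the ``scaling limit'' statement directly. This can be handled via the explicit piecewise-polynomial form of the Irwin-Hall density (smooth for $d\geq 3$, continuous and piecewise linear for $d=2$), while the remaining regimes require only crude bounds with plenty of margin.
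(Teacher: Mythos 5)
Your argument is correct in outline but takes a genuinely different route from the paper. The paper never passes to a scaling limit: it proves the key monotonicity combinatorially via the recurrence $s_{k,d,i}=s_{k,d,i-1}+s_{k,d-1,i-1}-s_{k,d-1,i-k}$, which yields $s_{k,d,i-1}/s_{k,d,i}\ge 1-c_0/n'$ for ranks near the middle (since $s_{k,d-1,i-1}=\Theta(k^{d-2})$ while $s_{k,d,i}=\Theta(k^{d-1})$ there), pairs this with the elementary estimate $\frac{(\lfloor n'/2\rfloor-i)!(\lceil n'/2\rceil+i)!}{\lfloor n'/2\rfloor!\lceil n'/2\rceil!}\ge (1+\frac{2i}{n'})^i$, and telescopes over the $j'$ steps from the middle level down to level $j$ to get a factor $\bigl[(1+\frac{2j'}{n'})(1-\frac{c_0}{n'})\bigr]^{j'}$, which is always $1-o(1)$ and is $\ge 1$ once $j'\ge c_1$; the far range $j<n'\frac{1-\varepsilon}{2}$ is killed crudely, as in your extreme regime. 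Two remarks on your version. First, the step you rightly single out as the main obstacle --- uniformity of $s_{k,d,j+d}/k^{d-1}\to f_d$ near the mode --- is exactly what the paper's recurrence delivers for free: each unit step changes $s_{k,d,\cdot}$ by at most $s_{k,d-1,\cdot}=O(k^{d-2})$, so over $o(k)$ steps the relative change is $o(1)$, and no appeal to the Irwin--Hall density is needed. Second, your three regimes as literally stated do not cover $\sqrt{k}\ll|t|\ll k$; this is easily repaired (the continuity argument only needs $t/k\to 0$, so take the central regime to be $|t|\le k/\log k$, and note that the crude bound $s_{k,d,j+d}\ge 1$ against $\binom{n'}{j}/\binom{n'}{\lfloor n'/2\rfloor}\le \sqrt{2n'}\,e^{-\Omega(t^2/n')}$ already handles all $|t|\ge k/\log k$, subsuming your moderate regime), but as written the case split is not exhaustive. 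What your approach buys is conceptual transparency --- a variance comparison between two unimodal mass functions; what the paper's buys is a short, fully elementary, quantitative argument with explicit constants.
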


\begin{proof}[Proof of Claim]
First of all, by symmetry, it is enough to prove the statement for $j<\frac{(k-1)d}{2}$. On the one hand, we have $\frac{(\lfloor n'/2\rfloor-i)!(\lceil n'/2\rceil+i)!}{\lfloor n'/2\rfloor!\lceil n'/2\rceil!}=\prod_{j=1}^i\frac{\lceil n'/2\rceil +i-j+1}{\lfloor n'/2\rfloor -j+1}\ge (1+\frac{2i}{n'})^i$.
 
On the other hand, we have $s_{k,d,i}=s_{k,d,i-1}+s_{k,d-1,i-1}-s_{k,d-1,i-k}$. Indeed, the number of elements of $[k]^d$ with rank $i$ and first coordinate 1 is $s_{k,d-1,i-1}$. The elements with rank $i$ and non-one first coordinate are in 1-to-1 relationship with those of rank $i-1$ and first coordinate \textit{not} $k$, so their number is $s_{k,d,i-1}-s_{k,d-1,i-k}$ (the bijection is established by subtracting 1 from the first coordinate). 
Rearranging and omitting a negative factor from the right hand side, we obtain $s_{k,d,i}-s_{k,d-1,i-1}\le s_{k,d,i-1}$.
 
If $\varepsilon>0$ is small enough, then for $i\ge n'\frac{1-\varepsilon}{2}$ we have that $s_{k,d,i}=\Theta(k^{d-1})$ and $s_{k,d-1,i-1}=\Theta(k^{d-2})$ as pointed out in the introduction. Therefore, $\frac{s_{k,d,i-1}}{s_{k,d,i}}\ge 1-\frac{s_{k,d-1,i-1}}{s_{k,d,i}}\ge 1-\frac{c_0}{n'}$ for some absolute constant $c_0$.

Putting this together: if $j\ge n'\frac{1-\varepsilon}{2}$, then writing $j=\lfloor n'/2\rfloor -j'$
\[ \frac{j!(n'-j)!s_{k,d,j+d}}{\lfloor n'/2\rfloor !\lceil n'/2\rceil!s_{k,d,\lfloor n'/2\rfloor+d}}\ge \left[\left(1+\frac{2j'}{n'}\right)\left(1-\frac{c_0}{n'}\right)\right]^{j'}.
\]
The right hand side is greater than 1, if $j$ is at least than $n'/2-c_1$ for some absolute constant $c_1$ and always $1-o(1)$.
 
Finally, if $j<n'\frac{1-\varepsilon}{2}$, then $j!(n'-j)!$ is already larger than $w_{k,d}\lfloor n'/2\rfloor !\lceil n'/2\rceil!$.
\end{proof}

By Claim \ref{kisclaim} and (\ref{lym}) we have 
\[
|\cG|(1-o(1))w_{k,d}\left\lfloor \frac{n'}{2}\right\rfloor!\left\lceil \frac{n'}{2}\right\rceil! \le (c+o(1))n'^{1-\alpha}w_{k,d}n'!.
\]
After rearranging and using the first paragraph of this proof, the statement of the theorem follows.
\end{proof}

\section{Results for the grid}

Recall that $k_2=c_2=1$, for $s\ge 3$ we defined $k_s$ to be the maximum $k$ such that $\sum_{j=2}^k j < s$, and $c_s=s-1-\sum_{j=2}^{k_s}j$. Theorem \ref{grid} (i) states that
for any $s \ge 1$, we have $La([k]^2,\vee_s)=(k_s+\frac{c_s}{k_s+1}+o(1))k$.

\begin{proof}[Proof of Theorem \ref{grid} (i)]
For the lower bound, we need a construction. For every $1\le i\le k$, let the leftmost element of $F$ in the $i$th row be the element in the diagonal. Furthermore, in each row, let the element of $F$ form an interval of size $k_s$ or $k_s+1$. If $i\equiv 1,2,\dots,c_s \mod k_s+1$, then let the length be $k_s+1$, otherwise $k_s$. (See Figure \ref{constfigure}.) The minimal elements of $F$ are exactly those in the diagonal, and the number of other elements greater than one such element is at most $\sum_{j=2}^{k_s}j+c_s=s-1$. The size of $F$ is $(k_s+\frac{c_s}{k_s+1})k-O(s)$.

\begin{figure}[!htp]
		\centering
		\includegraphics[width=0.6\linewidth]{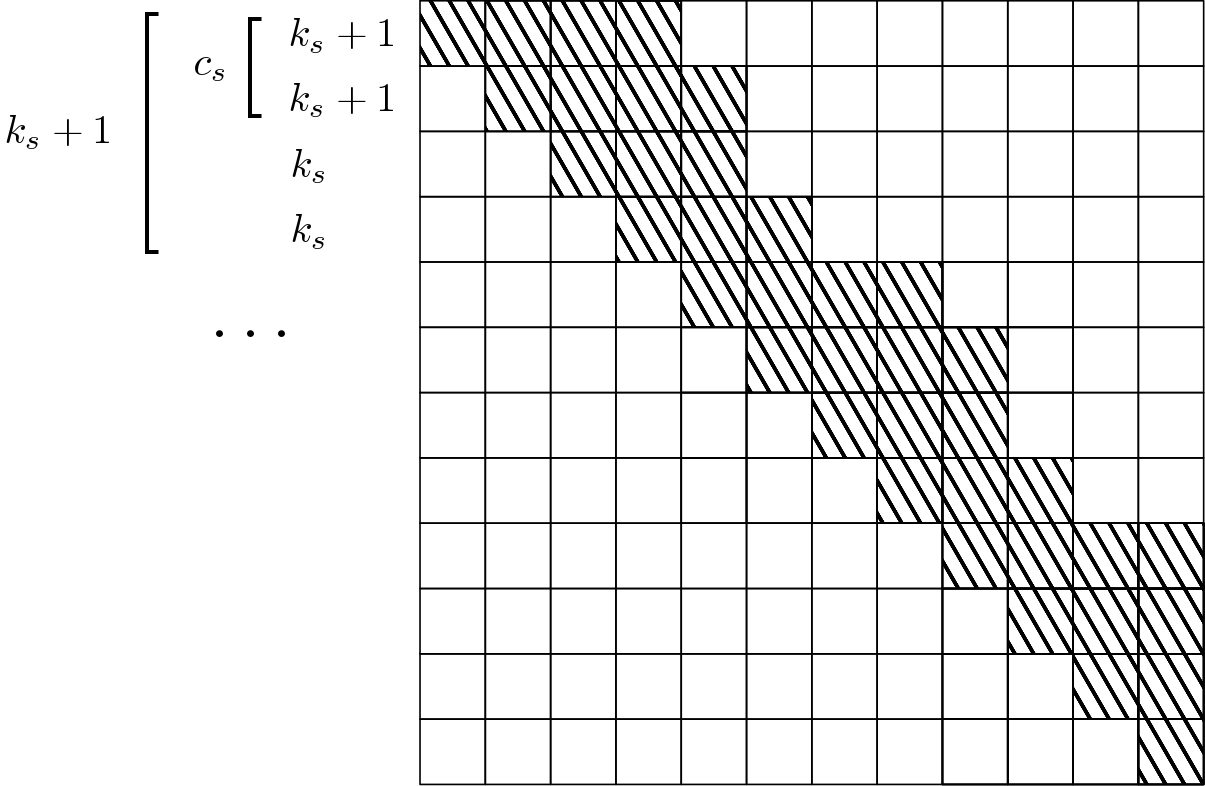}
		\caption{The construction given for Theorem \ref{grid} (i). In this example $k=12$ and $s=8$, therefore $k_s=3$ and $c_s=2$.}
		\label{constfigure}
\end{figure}

 Let $F\subseteq [k]^2$ be a $\vee_s$-free set of elements. Let $M\subseteq F$ denote the subset of minimal elements. Observe that elements of $M$ are both leftmost in their row, and lowest in their column. Also, to check whether $F$ is $\vee_s$-free, it is enough to check whether any element of $M$ is smaller than less than $s$ other elements of $F$. Because of this, we can assume that $F$ is convex, i.e., for any $f,f',g\in [k]^2$ with $f,f'\in F$, $f<g<f'$, we have $g\in F$. Indeed, if $g\notin F$, then we can replace $f'$ by $g$, and as minimal elements remain the same, we preserve the $\vee_s$-free property.  This implies that we can assume that in any row and column, the elements of $F$ form an interval, and if $m_i$ denotes the second coordinate of the smallest element of $F$ in the $i$th column, then the $m_i$s form a non-increasing sequence.
 
 Let $b_i$ denote the number of elements of $F$ in the $i$th row, so $\sum_{i=1}^k b_i=|F|$. Let $M'$ denote the set containing the lowest element of all columns. By definition, $M\subseteq M'$. For any pair $(m,f)$ with $m\in M',f\in F$, $m \leqslant f$, we can appoint the pair $(f',f)$, where $f'$ is the element at the intersection of the column of $m$ and the row of $f$. By the convexity of $F$, we must have $f'\in F$. Also, as $m$ is the lowest element of its column, this is a bijection. Therefore, we obtained that the number of such pairs is $\sum_{i=1}^k (\binom{b_i}2+b_i)=\sum_{i=1}^k\binom{b_i+1}{2}$. This function is convex, thus for fixed $|F|$, its minimum is attained when the $b_i$s differ by at most 1. Therefore, if $|F|>(k_s+\frac{c_s}{k_s+1}+o(1))k$, then the number of pairs $(f,b)$ with $f\in M'$, $b\in F$, $f\leqslant b$ is more than $(s-1)k$, so there must exist an $m\in M'$ smaller than at least $s$ many other elements.
\end{proof}

Let us continue with Theorem \ref{grid} (iii), which states that for $k,l \ge 2$ we have $La^*([k] \times [l],\vee_3)=2(k+l)-4$.

\begin{proof}[Proof of Theorem \ref{grid} (iii)] To see the lower bound let us consider the set $\{(a,b) : a = 1 \textrm{ or }l \textrm{ or } b = 1 \textrm{ or }k\}$. 

We prove the upper bound by contradiction. Let us denote the elements of copies of $\vee_3$ by $A,B,C$ and $D$: let $A$ be the smallest element and the other elements $B,C$ and $D$ ordered  by their second coordinate decreasingly (equivalently, by their first coordinate increasingly). If $k=2$ or $l=2$, then the statement is trivially true. Let us consider a counterexample $F\subseteq [k]\times[l]$ with $k+l$ minimal and among these the sum of the coordinates of the elements is maximal. If $F$ contains at most 2 elements in a row or in a column, then we can delete them, and either we get a counterexample with smaller $k+l$ or either $k$ or $l$ is 2. Therefore all rows and columns contain at least 3 elements of $F$.

The next observation is that one can put $(k,l)$ into $F$ without violating the strong $\vee_3$-freeness condition. 

If $(1,1)\in F$, then $F$ contains at most two elements from each diagonal, so we are done.

If $(1,1) \not \in F$, then let $(k_1,1)$ be the first element in $F$ in the first row (so $k_1 \ge 2$). As we have at least two elements above $(k_1,1)$ and at least two elements right to $(k_1,1)$, we have that $(k_1+1,2) \not \in F$ by the strong $\vee_3$-free property. Note that by the maximality of the sum of the coordinates in $F$ we cannot replace $(k_1,1)$ by $(k_1+1,2)$ in $F$. This means that $F\setminus \{(k_1,1)\}\cup \{(k_1+1,2)\}$ contains a strong $\vee_3$. Note also that $(k_1+1,2)$ can only play the role of $D$ in this strong $\vee_3$. The role of $A$ can be played only by an element $(k',2)$ with $k' < k_1$. Let $(k_2,2)$ be the first element in the second row. Similar way as above we have that $(k_2+1,3) \not \in F$ and we cannot put $(k_2,2)$ into $(k_2+1,3)$ by the maximality of the sum of the coordinates of the elements in $F$, so $(k_2+1,3)$ would create a strong $\vee_3$. In that strong $\vee_3$, the element $(k_2+1,3)$ can only play the role of $D$:

\medskip 

$\bullet$ it can not be $A$, as otherwise $(k_2,2)$ could also play the role of $A$ instead,

\smallskip 

$\bullet$ it can not be a $B$ as there is at least 1 element above $(k_2+1,3)$ in the $(k_2+1)$th column, and that could play the role of $B$ as well,

\smallskip 

$\bullet$ it can not be $C$ as there is no element $(x,y) \in F\setminus\{(k_2,2)\}$ with $x \le k_2$ and $y \le 2$.

\medskip 

This implies that there is $(k'',3) \in F$ with $k''< k_2$, and let $(k_3,3)$ be the first element in $F$ in the third row.
We can continue the same way: we have elements $(k_1,1), (k_2,2), (k_3,3), ...$ with $k_1 > k_2 > k_3 >...$. If the first element in the first column is $(1,t)$, then we have $k_1 \ge t$. By changing the role of columns and rows in the above reasoning, we obtain $k_1 \le t$, and so $k_1=t$. The previous argument also implies that $k_j=k_1+1-j$ for $1 \le j \le k_1$; $\{(k_1,1), (k_2,2), (k_3,3), ..., (1,k_1)\}=:M\subset  F$ and $\{(k_1+1,2), (k_2+1,3), (k_3+1,4), ..., (2,k_1+1)\}=:M^+ $ is disjoint with $F$. (Observe that $k_1< \min\{k,l\}$ as every row and column contains at least 2 elements. Therefore there exist $k_1+1$st column and row.) 

Let us consider $F' := F \setminus M \cup M^+$. Note that we described the possible strong copies of $\vee_3$ in $F$. By that it is easy to see that $F'$ is also strong $\vee_3$-free and the sum of the coordinates is larger, a contradiction.
\end{proof} 

Recall that Theorem \ref{grid} (iv) states that $La^*([k]^2,\vee_s)\ge 2(s-1)k-O(s^2)$.

\begin{proof}[Proof of Theorem \ref{grid} (iv)] To prove this part we will show a family $G\subseteq [k]^2$ avoiding  strong $\vee_s$. Let $G$ be the union of the $(s-1)$ highest rows and the $(s-1)$ rightmost columns. Then $|G|=2(s-1)k-(s-1)^2$, and $G$ is strong $\vee_s$-free, since none of its elements can be the minimal element of a strong $\vee_s$.
\end{proof}

We will prove Theorem \ref{grid} (ii) and (v) later, together with the corresponding saturation statements from Theorem \ref{satstar}.
Let us turn to  Proposition \ref{doublechain}, which states that for any poset $P$, we have
$La([k]^2,P)\le (\frac{|P|+h(P)}{2}-1)k+O(1)$. Moreover, matching lower bounds are given in case of the posets $D_2$ and $D_3$.

\begin{proof}[Proof of Proposition \ref{doublechain}]
In \cite{BN}, a poset called the {\it infinite double chain} is introduced. Its elements are $L_i,~M_i,~i\in\mathbb{Z}$. The defining relations between the elements are
$i<j\Rightarrow L_i< L_j,~ L_i< M_j,~ M_i< L_j$. Burcsi and Nagy proved that if a subset of the infinite double chain is $P$-free for some finite poset $P$, then its size is at most $|P|+h(P)-2$.

Now let $F$ be a $P$-free subset of $[k]^2$. We call the points $(x,y)\in [k]^2$ for which $x-y$ is constant an increasing diagonal. Consider the union of four consecutive increasing diagonals. This structure is isomorphic to a subset of the infinite double chain: the middle two diagonals correspond to elements $L_i$ while the outer elements correspond to the elements $M_i$. Therefore there can be at most $|P|+h(P)-2$ elements of $F$ there. There are a total of $2k-1$ increasing diagonals, so we can partition them into $\left\lceil\frac{k}{2}\right\rceil-1$ groups, leaving out the last one or three diagonals, a constant number of elements. In conclusion, $|F|\le (\left\lceil\frac{k}{2}\right\rceil-1)(|P|+h(P)-2)+O(1)\le(\frac{|P|+h(P)}{2}-1)k+O(1)$.

The upper bounds for $D_2$ and $D_3$ follow from the general result proved above. For the lower bounds, consider the following $D_2$-free and $D_3$-free families in $[k]^2$:
$$\{(x,y)\in[k]^2~:~x+y\in\{k,~k+2\}\}\cup \{(x,y)~:~x+y=k+1~\text{and}~x~\text{is odd}\},$$
$$\{(x,y)\in[k]^2~:~k\le x+y\le k+2\}.$$

\end{proof}

Let us turn to saturation problems. Recall that Proposition \ref{satnni} states that the weak saturation number is always upper bounded by a constant that does not depend on $k$. More precisely, for any positive integers $p,d\ge 2$ and for any $p$-element poset $P$ and integer $k$ we have $sat([k]^d,P)\le \sum_{r=d}^{d+p-2}s_{k,d,r}$.

\begin{proof}[Proof of Proposition \ref{satnni}]
For any enumeration $\pi=x_1,x_2,\dots x_{k^d}$ of $[k]^d$, one can create the $P$-saturating family $\cF_\pi\subseteq [k]^d$ with respect to $\pi$ greedily as follows: we let $\cF_0=\emptyset$ and whenever $\cF_{i-1}$ is defined, we set $\cF_i=\cF_{i-1}\cup \{x_i\}$ if $\cF_{i-1}\cup \{x_i\}$ is $P$-free, and let $\cF_i=\cF_{i-1}$ otherwise. By definition, $\cF_\pi:=\cF_{k^d}$ is $P$-saturating.

Let $\pi=x_1,x_2,\dots,x_{k^d}$ be an enumeration of $[k]^d$ such $r(x_i)\le r(x_j)$ for any $i\le j$. We claim that $\cF_\pi$ is downward closed, i.e., if $x_i\in \cF_\pi$, then any $y\le x_i$ belongs to $\cF_\pi$. Indeed, the property of the enumeration $\pi$ ensures that at any moment when we decide about whether to include an $x_j$, then any element of $\cF_{j-1}$ that is in relation with $x_j$ must be smaller than $x_j$. Also, any $y\le x_i$ is enumerated before $x_i$, so $y=x_j$ for some $j<i$. If $y=x_j\notin \cF_\pi$, then it is because $\cF_{j-1}\cup \{y\}$ contains a copy of $P$ that contains $y$, thus $y$ is a maximal element in that copy of $P$. But by the above, if we replace $y$ by $x_i$, then we get another copy of $P$. (Here we use that we look for a weak copy of $P$.)

Clearly, $\cF_{\pi}$ cannot contain a chain of length $p$ as that is a weak copy of $P$. Therefore, we must have $\cF_\pi\subseteq \cup_{r=d}^{d+p-2}S_{d,k,r}$ and the result follows.
\end{proof}

Observe that enumerations considering low-rank elements first are not necessarily the best even among greedily picked $P$-saturating families. Indeed, if $P$ is the chain of length $p$, then one is much better off considering low and high ranked elements alternatingly. More formally, we say that an enumeration $\pi=x_1,x_2,\dots,x_{k^d}$ is \textit{middle comes last} (MCL) if for any $i\le j$ we have $|r(x_i)-\frac{d(k+1)}{2}|\ge |r(x_j)-\frac{d(k+1)}{2}|$. If $p=2m+1$, then the greedy $C_l$-saturating family with respect to an MCL enumeration is $\cup_{r=d}^{d+m-1}S_{k,d,r}\cup \cup_{r=kd-m+1}^{kd}S_{k,d,r}$, while the enumerations used in Theorem \ref{satnni} yield $\cup_{r=d}^{d+2m-1}S_{k,d,r}$ which is significantly larger. We do not know whether MCL enumerations always give the best greedy approach..

\smallskip

Let us continue with strong saturation. Recall that Theorem \ref{strongsat} states that for any poset $P$ with $dim(P)=2$ we either have $sat^*([k]^2,P)=O(1)$ or $sat^*([k]^2,P)=\Theta(k)$.

\begin{proof}[Proof of Theorem \ref{strongsat}]
Recall that the strong saturation problem for a poset $P$ is equivalent to the saturation problem for a finite set of $0$-$1$ matrices, one of which is a permutation matrix. A result of Marcus and Tardos \cite{MT} states that $ex(n,n,M)=O(n)$ for any permutation matrix $M$, which implies that $sat^*([k]^2,P)=O(k)$ holds.

Fulek and Keszegh \cite{FK} showed that for any matrix $M$, if $sat(n,n,M)$ is not constant, then it is at least linear. Their argument stays valid for any finite set $M_1,M_2,\dots,M_r$ of matrices, so we just sketch it here. If $M_i$ is a $q_i\times p_i$ matrix, then let $q:=\max\{q_i,p_i: i=1,2,\dots,r\}$. If $sat(n,n,\{M_1,M_2\dots,M_r\})\ge \frac{n}{q}$ for every large enough $n$, then clearly the saturation number grows at least linearly. Otherwise, there exist a large enough $n_0$ and an $n_0\times n_0$ matrix $A$ that is $\{M_1,M_2,\dots,M_r\}$-saturated and $A$ contains less than $\frac{n_0}{q}$ 1-entries. Then there must exist $q$ consecutive all-0 rows and $q$ consecutive all-0 columns of $A$. It is easy to check that for any $n>n_0$ if we add $n-n_0$ all-0 rows and columns to $A$ such that together with the $q$ consecutive all-0 rows and columns of $A$ they stay consecutive, then the obtained matrix $A'$ is $\{M_1,M_2,\dots,M_r\}$-saturated and contains the same number of 1-entries as $A$.

This shows that for any finite  set of 0-1 matrices, the saturation number is either constant or at least linear. As the strong saturation problem for poset $P$ is equivalent to the saturation problem for a finite set of 0-1 matrices, this concludes the proof of Theorem \ref{strongsat}.
\end{proof}

It is more convenient for us to prove Theorem \ref{satstar} together with the corresponding results from Theorem \ref{grid}.
Theorem \ref{satstar} (i) states that for any $k,l$ we have $sat^*([k]\times [l], \{\vee_2,\wedge_2\})=\max\{k,l\}$ and Theorem \ref{grid} (v) states that $La^*([k]\times [l],\{\vee_2,\wedge_2\})=k+l-1$. We will use the following notions in the proof.
The \textit{comparability graph} of a poset $P$ has vertex set $P$ and $p\neq q$ are joined by an edge if $p<q$ or $q<p$. A poset is \textit{connected} if its comparability graph is connected, and a component of $P$ is a connected component of its comparability graph.

\begin{proof}[Proof of Theorem \ref{satstar} (i) and Theorem \ref{grid} (v)]
Observe that $P$ is strong $\{\vee_2,\wedge_2\}$-free if and only if the components of $P$ are chains (or equivalently the components of its comparability graph are cliques). Let $F\subseteq [k]\times [k]$ be a strong $\{\vee_2,\wedge_2\}$-saturated set of elements, and let $C_1,C_2,\dots,C_h$ be the components of $F$ (thus we know that each $C_i$ is a chain). For $i=1,2,\dots,h$ let $(a_i,b_i)$ be the minimal element of $C_i$ and $(x_i,y_i)$ be the maximal element of $C_i$. 
\begin{itemize}
\item
As the chains are incomparable (they are the connected components of $F$), after renumbering the chains we can assume that for any $i< j$ we have $x_i<a_j$ and $y_j<b_i$.
\item
As $F$ is saturated, for every $i$ the component $C_i$ is a maximal chain between $(a_i,b_i)$ and $(x_i,y_i)$, as otherwise we could extend $C_i$ to such a chain that is still incomparable with the other chain components of $F$, thus the resulting larger set of elements would be $\{\vee_2,\wedge_2\}$-free, contradicting our assumption on $F$. In particular, $|C_i|=(x_i-a_i+1)+(y_i-b_i+1)-1$.
\item 
For every $1\le \alpha \le k$, there exists $i$ with $a_i\le \alpha \le x_i$. Indeed, if not then there would exist a counterexample $\alpha=x_i+1$ for some $i$. But then adding $(\alpha,y_i)$ to $C_i$ and to $F$ would keep the $\{\vee_2,\wedge_2\}$-free property contradicting the maximality of $F$. Similarly, for every $1\le \beta \le l$, there exists $j$ with $b_j\le \beta \le y_j$. In particular, $\sum_{i=1}^h(x_i-a_i+1)=k$ and $\sum_{i=1}^h(y_i-b_i+1)=l$.
\end{itemize}
The above bullet points yield that $|F|=\sum_{i=1}^h|C_i|=k+l-h$. Thus the size of $F$ is largest if $h=1$ and thus $La^*([k]\times [l],\{\vee_2,\wedge_2\})=k+l-1$, while the size of $F$ is smallest if $h$ is as large as possible. Clearly, $h$ cannot be more than the width of $[k]\times [l]$, which is $\min\{k,l\}$, so $sat^*([k]\times [l],\{\vee_2,\wedge_2\})\ge \max\{k,l\}$ and the construction $\{(k,1),(k-1,2),\dots, (1,k), (1,k+1),\dots,(1,l)\}$ shows that equality holds.
\end{proof}

We continue with Theorem \ref{satstar}
(ii) and Theorem \ref{grid} (ii), which state that for any integers $k,l$ we have $sat^*([k]\times [l],\vee_2)=La^*([k]\times [l],\vee_2)=k+l-1$.

\begin{proof}[Proof of Theorem \ref{satstar} (ii) and Theorem \ref{grid} (ii)]
We proceed by induction on $k+l$ with the base cases $k=1$ or $l=1$ being trivial. Let $F$ be any strong $\vee_2$-saturated subset of $[k]\times [l]$ and observe that $(k,l)\in F$ as it is not contained in any strong copy of $\vee_2$. Let $(a_1,b_1),(a_2,b_2),\dots,(a_h,b_h)$ be the maximal elements of $F\setminus \{(k,l)\}$. By reordering, we may assume  $a_1<a_2<\dots <a_h$ and $b_1>b_2>\dots >b_h$. Observe that by $\vee_2$-free property, any $f\in F$ is below only one $(a_j,b_j)$ and thus \begin{equation}\label{racs}
    F\setminus \{(k,l)\}\subseteq \cup_{j=1}^h[(a_{j-1}+1,b_{j+1}+1),(a_j,b_j)],
\end{equation} where $a_0=b_{h+1}=0$ and $[f_1,f_2]=\{g:f_1\leqslant g \leqslant f_2\}$. We claim that $h\le 2$ and if $h=2$, then $b_1=l$, $a_2=k$. Indeed, if there exists $j$ with $a_j\neq k$, $b_j\neq l$ and there exists $1\le i\le h$, $i\neq j$, then for $(a,b)$ with $a=\max\{a_i,a_j\}$, $b=\max\{b_i,b_j\}$ we have $(a,b)\neq (k,l)$ and thus $(a,b)\notin F$. But $(a,b)$ does not create any strong copy of $\vee_2$ as there is only one element of $F$, namely $(k,l)$, that is larger than $(a,b)$, and also, by (\ref{racs}), any $f\in F$ that is smaller than $(a,b)$ is comparable only to elements in $\{f':f'\leqslant (a,b)\}\cup \{(k,l)\}$.

We distinguish two cases. If there is a unique maximal element $(a,b)$ of $F\setminus \{(k,l)\}$, then adding a maximal chain from $(a,b)$ to $(k,l)$ does not violate the strong $\vee_2$-free property, thus $(a,b)=(k,l-1)$ or $(a,b)=(k-1,l)$. But then $F\setminus \{(k,l)\}$ is strong $\vee_2$-free saturating in $[a]\times [b]$, so by induction we obtain $|F|=(a+b-1)+1=k+l-1$.

Finally, if there are two maximal elements of $F\setminus \{(k,l)\}$, then these must be $(k,j)$ and $(i,l)$ for some $1\le j \le l$ and $1\le i\le k$. As any  $f\in [(i+1,1),(k,j)]$ and $f'\in [(1,j+1),(i,l)]$ form an incomparable pair, by (\ref{racs}), we have that $F\cap [(i+1,1),(k,j)]$ is strong $\vee_2$-saturated in $[(i+1,1),(k,j)]$ and $F\cap [(1,j+1),(i,l)]$ is strong $\vee_2$-saturated in $[(1,j+1),(i,l)]$. Thus by induction, we obtain $|F|=(k-i+j-1)+(i+l-j-1)+1=k+l-1$.
\end{proof}

We finish this section with the proof of Theorem \ref{satvees}, which states the following. Let $P$ be a poset with $dim(P)=2$ such that a strong copy of $P$ in a two dimensional grid cannot contain two neighboring points. Then $sat^*([k]\times [l],P)\ge \max\{k,l\}$.

\begin{proof}[Proof of Theorem \ref{satvees}] Suppose that $F$ is a $P$-saturated subset of $[k] \times [l]$. We prove that it must contain an element in each column, and by an analogous argument for the rows we are done.

We prove by contradiction: suppose there is an empty column. If $F$ is not empty, then we can suppose that there is an empty column next to a non-empty one: $\{(h,j): j \in [l]\} \cap F = \emptyset$ and $\{(g,j): j \in [l]\} \cap F \neq \emptyset$ with either $g=h-1$ or $g=h+1$. Let us define $y$ as $\max\{j:(g,j)\in F\}$ if $g=h-1$ and $\min\{j:(g,j)\in F\}$ if $g=h+1$. Let $a:=(g,y)\in F$ and $b:=(h,y)\notin F$.

Since $F$ is $P$-saturated, $F\cup\{b\}$ contains a strong copy of $P$ that includes $b$. By the property of $P$, the neighboring point $a$ is not in this copy. Also note that all points of the grid, except for certain points in the columns $g$ and $h$, compare the same way to $a$ and $b$ (smaller than, greater than or incomparable to both). By the selection of $a$ and $b$ these exceptional points are not in $F$. Therefore as far as subposets in $F$ are concerned, $a$ and $b$ are interchangeable. This means that there is a strong copy of $P$ with $a$ in the place of $b$. That contradicts the assumption that $F$ is strong $P$-free.
\end{proof}

\section{Open problems}

The widely believed conjecture of forbidden subposet problems for the Boolean case appeared first in \cite{B,GriLu} and considers the limits $\pi_P=\lim_{n\rightarrow \infty}\frac{La(n,P)}{\binom{n}{\lfloor n/2\rfloor}}$ and $\pi^*_P=\lim_{n\rightarrow \infty}\frac{La^*(n,P)}{\binom{n}{\lfloor n/2\rfloor}}$. These limits are yet to be proved to exist, however, the following natural conjecture gives their possible values.

\begin{conjecture}\label{foconj}
For a poset $P$ let us denote by $e(P)$ the largest integer $m$ such that for any $n$, any family $\cF\subseteq 2^{[n]}$ consisting of $m$ \textit{consecutive} levels is weak $P$-free and the parameter $e^*(P)$ is defined analogously for strong $P$-free families. Then $\pi_P=e(P)$ and $\pi^*_P=e^*(P)$ hold.
\end{conjecture}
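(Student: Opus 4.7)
The plan is to separate Conjecture~\ref{foconj} into an easy lower bound and a genuinely open matching upper bound, and to attack the upper bound by transferring the problem to the grid using the machinery built earlier in this paper. For the lower bound, any union of $e(P)$ consecutive middle levels of $2^{[n]}$ is weak $P$-free by definition of $e(P)$, and has size $(e(P)+o(1))\binom{n}{\lfloor n/2\rfloor}$ because $\binom{n}{\lfloor n/2\rfloor+t}=(1-o(1))\binom{n}{\lfloor n/2\rfloor}$ for any fixed $t$. This immediately gives $\liminf_{n\to\infty}La(n,P)/\binom{n}{\lfloor n/2\rfloor}\ge e(P)$, and the same construction yields the strong analogue with $e^*(P)$. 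The real content of the conjecture is therefore the matching upper bound, together with the separate issue of the existence of the two limits.

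For the upper bound the plan is to invoke Corollary~\ref{gridboole2}: it suffices to prove $\limsup_{k\to\infty}La([k]^d,P)/w_{k,d}\le e(P)$ (and the strong analogue) for some fixed dimension $d$. The natural choice is $d=dim(P)$, so that $P$ embeds strongly into $[|P|]^d$, the rank sets $S_{k,d,r}$ take over the role of Boolean levels, and the forbidden-submatrix framework recalled in Section~1 becomes available. One would then try to show that every weak (strong) $P$-free subset of $[k]^d$ is essentially supported on $e(P)$ consecutive rank sets, up to an error of $o(w_{k,d})$. Tools worth deploying on the grid include antichain and chain partitions of $[k]^d$, a higher-dimensional version of the double chain argument used in Proposition~\ref{doublechain}, and — for posets where the convex-hull style reduction used in the proof of Theorem~\ref{grid}~(i) applies — a direct counting argument along increasing diagonals.

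The main obstacle is precisely the one that has kept the Boolean version of this conjecture open since its formulation: a dense $P$-free family in $[k]^d$ need not be approximately a union of rank sets, and averaging over chains or double chains is known to give the optimal constant only for a short list of $P$ (paths, $V$- and $\Lambda$-shapes, and a handful of further small posets). I would first attack the existence of $\pi_P$ and $\pi^*_P$ via a supersaturation argument, showing that slightly exceeding the conjectured density forces many copies of $P$ and hence pins down $\lim$ rather than $\limsup$. Identifying the correct constant is then the genuinely hard step; a realistic short-term target is the classical diamond $D_2$, for which $e(D_2)=2$ is elementary but no averaging technique in the cube has matched this value. Attacking $D_2$ on $[k]^2$ — where the convex-hull analysis used for Theorem~\ref{grid}~(i) is available and only two dimensions are involved — seems the most concrete route that the grid reduction opens up, and any progress there would plausibly be the first genuine application of Corollary~\ref{gridboole2} to a previously unresolved $\pi_P$.
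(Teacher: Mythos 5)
This statement is Conjecture~\ref{foconj}: the paper does not prove it, and neither do you. Your proposal correctly isolates the easy half --- taking the $e(P)$ (resp.\ $e^*(P)$) middle levels of $2^{[n]}$ gives $\liminf_{n\to\infty} La(n,P)/\binom{n}{\lfloor n/2\rfloor}\ge e(P)$, which is standard --- but everything else is a research plan rather than an argument. The existence of the limits $\pi_P,\pi^*_P$ and the matching upper bound are exactly the open content, and you acknowledge as much, so there is no proof here; the statement remains a conjecture.

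Beyond that, one concrete step of your plan is provably a dead end. You propose to apply Corollary~\ref{gridboole2} with $d=\dim(P)$ and to show $\limsup_{k\to\infty} La([k]^d,P)/w_{k,d}\le e(P)$. For the diamond $D_2$ --- the very test case you single out --- this fails: Proposition~\ref{doublechain} gives $La([k]^2,D_2)=\frac{5}{2}k+O(1)$ while $w_{k,2}=k$, so $\pi_{2,D_2}=\frac{5}{2}>2=e(D_2)$. More generally the paper records $\pi_P\le\pi_{d,P}$ for every fixed $d$ and conjectures (Conjecture~\ref{mindenszep}) that equality is attained only in the limit $d\to\infty$; the conjectured value $\pi_{d,D_2}=2+\frac{1}{d}$ lies strictly above $e(D_2)$ for every finite $d$. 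Hence no single fixed dimension can deliver the sharp constant via Corollary~\ref{gridboole2}, and working in $[k]^2$ for $D_2$ can at best reprove $\pi_{2,D_2}=\frac{5}{2}$, which the paper already does. Any grid-based attack on Conjecture~\ref{foconj} must therefore control the $d\to\infty$ behaviour, which is itself conjectural.
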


We conjecture that the values $\pi_P$ and $\pi^*_P$ can be obtained via forbidden subposet problems in the grid for any poset $P$ using Corollary \ref{gridboole}.

\begin{conjecture}\label{mindenszep}
For any at most $d$-dimensional poset $P$ there exist $\pi_{d,P}=\lim_{k\rightarrow \infty}\frac{La([k]^d,P)}{w_{k,d}}$ and $\pi^*_{d,P}=\lim_{k\rightarrow \infty}\frac{La^*([k]^d,P)}{w_{k,d}}$. Moreover, $\lim_{d\rightarrow \infty} \pi_{d,P}=\pi_P$ and $\lim_{d\rightarrow \infty} \pi^*_{d,P}=\pi^*_P$ hold.
\end{conjecture}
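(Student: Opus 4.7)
The plan is to split Conjecture \ref{mindenszep} into two independent sub-problems: (A) existence of $\pi_{d,P}$ and $\pi^*_{d,P}$ for each fixed $d$, and (B) convergence of these grid-densities to $\pi_P$ and $\pi^*_P$ as $d\to\infty$. I will describe only the weak case; the strong case is analogous.

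For (A), my first attempt would be a Fekete-style supermultiplicativity argument. Consider the grid $[mk]^d$ partitioned into $m^d$ axis-aligned translates of $[k]^d$, sitting in consecutive ``layers'' according to the sum of their corner coordinates. Taking an extremal $P$-free family $\cF_k\subseteq [k]^d$ and placing a translated copy of $\cF_k$ inside each of a carefully chosen antichain of translates (so that elements in different copies are incomparable) gives a $P$-free family in $[mk]^d$. If the buffer spacing costs only $o(k^d)$ elements, then after dividing by $w_{mk,d}=(1+o(1))m^{d-1}w_{k,d}$ and sending $m\to\infty$, one obtains
\[
\liminf_{K\to\infty}\frac{La([K]^d,P)}{w_{K,d}}\ge \frac{La([k]^d,P)}{w_{k,d}}-o_k(1),
\]
and the existence of the limit follows by combining with the trivial upper bound. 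The key technical point is to show that a single rank-layer of translates accommodates enough copies of $[k]^d$ to absorb the $w_{mk,d}/w_{k,d}$ scaling.

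For (B), one direction is free: applying Theorem~\ref{vissza} with $\alpha=1$ and $c=\pi_{d,P}+\varepsilon$ shows that $\pi_P\le \pi_{d,P}$ for every admissible $d$, hence $\pi_P\le \liminf_{d\to\infty}\pi_{d,P}$. For the matching upper bound $\limsup_{d\to\infty}\pi_{d,P}\le \pi_P$, the plan is to mimic the $\pi$-block construction from the proof of Theorem~\ref{vissza} in reverse. Given an extremal $P$-free $\cF\subseteq 2^{[n]}$ with $n=d(k-1)$, one fixes an arbitrary permutation $\pi$ of $[n]$ and defines a subfamily of $[k]^d$ consisting of those $d$-tuples $(f_1+1,\dots,f_d+1)$ for which the corresponding $\pi$-block lies in $\cF$. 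Because the map from $\pi$-blocks to $[k]^d$ is an order-isomorphism onto its image, the resulting subposet of $[k]^d$ is $P$-free, and averaging over $\pi$ lets one estimate its size against $|\cF|$ via the identity \eqref{lym} and Claim~\ref{kisclaim} run in the opposite direction.

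The main obstacle I expect is in step (A): the sub-grid packing argument must achieve density $\pi_{d,P}$ per copy \emph{after} averaging against the much larger denominator $w_{mk,d}$, and the blow-up from $w_{k,d}=\Theta(k^{d-1})$ to $w_{mk,d}=\Theta((mk)^{d-1})$ means one needs genuinely $\Theta(m^{d-1})$ incomparable translates of $[k]^d$ inside $[mk]^d$ rather than just $m$ of them along the main diagonal. Constructing such a large antichain of translates while keeping all pairs of elements across different copies incomparable seems to require more than a naive diagonal arrangement, and this is presumably why the conjecture is left open. A secondary obstacle in step (B) is that the averaging estimate on $s_{k,d,|F|+d}|F|!(n-|F|)!$ must be tight enough (not just an inequality) to guarantee the matching \emph{lower} bound, which would likely require revisiting Claim~\ref{kisclaim} with a sharper two-sided bound centered at the middle rank.
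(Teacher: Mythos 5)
You should first be clear that the statement you are attacking is Conjecture \ref{mindenszep}: the paper offers no proof of it, and even the existence of $\pi_P$ and $\pi^*_P$ (Conjecture \ref{foconj}) is open, so there is nothing in the paper to compare your argument against. The only part of your plan that matches something actually established is the ``free'' inequality $\pi_P\le\pi_{d,P}$, which the paper records as a consequence of Corollary \ref{gridboole2}.

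Both halves of your sketch have concrete gaps. In (A), the supermultiplicativity is lossy by a constant factor for $d\ge 3$: two translates of $[k]^d$ with minimal corners $ku$ and $kv$, $u,v\in[m]^d$, contain no comparable pair of points precisely when $u$ and $v$ are incomparable in $[m]^d$, so the largest family of pairwise ``independent'' translates has size $w_{m,d}$, which is $c_d m^{d-1}$ with $c_d<1$ for $d\ge3$ (and $c_d\to\sqrt{6/(\pi d)}\to 0$). Since $w_{mk,d}/w_{k,d}\sim m^{d-1}$, the construction only yields $\liminf_{K}La([K]^d,P)/w_{K,d}\ge (1-o(1))c_d\cdot La([k]^d,P)/w_{k,d}$, which does not force the limit to exist; no arrangement of disjoint boxes can beat the width of $[m]^d$, so Fekete's lemma does not apply. (A side issue: for disconnected $P$, a union of $P$-free families placed in pairwise incomparable boxes need not be $P$-free.) In (B), your ``reverse $\pi$-block'' argument proves an inequality pointing the same way as the one you already have: extracting the $\pi$-blocks of an extremal $\cF\subseteq 2^{[n]}$ and averaging over $\pi$ produces a $P$-free subfamily of $[k]^d$ of size roughly $\pi_P w_{k,d}$, i.e.\ again a \emph{lower} bound on $\pi_{d,P}$, whereas the missing direction is $\limsup_{d}\pi_{d,P}\le\pi_P$. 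Proving that would require converting a dense $P$-free family $\cG\subseteq[k]^d$ into a comparably dense one in $2^{[d(k-1)]}$; a single order-embedding gives only about $w_{k,d}=\Theta(k^{d-1})$ sets inside a cube whose middle level has size about $2^{d(k-1)}/\sqrt{d(k-1)}$ --- exponentially too few --- and the union of the images over all permutations is not $P$-free. So the genuinely hard direction of the conjecture is untouched, and the existence claim is not established either.
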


Corollary \ref{gridboole} implies $\pi_P\le \pi_{d,P}$ and $\pi^*_P\le \pi^*_{d,P}$ for any $P$ and $d$. The following conjecture states that $\pi_{d,P}$ is monotone decreasing in $d$.

\begin{conjecture}
For any at most $d$-dimensional poset $P$, we have $\pi_{d,P}\ge \pi_{d+1,P}$ and $\pi^*_{d,P}\ge \pi^*_{d+1,P}$
\end{conjecture}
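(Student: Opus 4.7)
The natural strategy is an averaging argument that generalises the chain-embedding trick used in the proof of Theorem~\ref{vissza}, now going from a $(d+1)$-dimensional grid down to a $d$-dimensional one rather than from the Boolean cube down to a grid. Write $[k]^{d+1} = [k]^{d-1} \times [k]^2$, with elements $(y,a,b)$. For each strictly increasing chain $c = ((a_1,b_1),\dots,(a_k,b_k))$ of length $k$ in $[k]^2$, the map $\phi_c \colon [k]^d \to [k]^{d+1}$ defined by $\phi_c(y, j) = (y, a_j, b_j)$ is a strong poset embedding, so the pullback $\phi_c^{-1}(\cF)$ of any strong $P$-free $\cF \subseteq [k]^{d+1}$ is again strong $P$-free in $[k]^d$, of size at most $La^*([k]^d, P)$. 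The weak case is analogous, using only the order-preservation of $\phi_c$.

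Let $\mathcal{C}$ be a chosen family of such chains and set $n(a,b) = |\{c \in \mathcal{C} : (a,b) \in c\}|$. A standard double counting gives
\[
\sum_{x = (y,a,b) \in \cF} n(a,b) \;=\; \sum_{c \in \mathcal{C}} |\phi_c^{-1}(\cF)| \;\le\; |\mathcal{C}| \cdot La^*([k]^d, P).
\]
The plan is to choose $\mathcal{C}$ so that the induced weight $n(a,b)/|\mathcal{C}|$ matches, up to $1+o(1)$, the middle-level profile $s_{k,d-1, r-a-b}/s_{k,d+1,r}$ of $[k]^{d+1}$, where $r = \lfloor (k+1)(d+1)/2 \rfloor$. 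If this calibration succeeds for families $\cF$ supported near the middle level, the left-hand side becomes $(1+o(1))\,|\cF|\,|\mathcal{C}|\,w_{k,d}/w_{k,d+1}$, and the inequality rearranges to $|\cF|/w_{k,d+1} \le (1+o(1))\,La^*([k]^d, P)/w_{k,d}$, yielding $\pi^*_{d+1,P} \le \pi^*_{d,P}$ in the limit; an identical argument, using only the order-preservation of $\phi_c$, handles the weak version.

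The main obstacle is the weight-balancing step. Chains of length $k$ in $[k]^2$ pass through central cells $(a,b)$ (those with $a+b \approx k+1$) with far higher multiplicity than through corner cells, and a priori a $P$-free family $\cF$ could concentrate in cells receiving little weight. Uniform averaging over any natural $\mathcal{C}$ loses the factor $\sqrt{(d+1)/d}$ and yields only the weaker bound $\pi^*_{d,P} \ge \sqrt{d/(d+1)}\,\pi^*_{d+1,P}$. To remove this loss one must simultaneously (i) tune the distribution on $\mathcal{C}$ so that $n(\cdot,\cdot)$ matches the non-uniform profile $s_{k,d-1,r-a-b}$, and (ii) justify that, uniformly over all (not just middle-concentrated) strong $P$-free families $\cF$, the weighted sum $\sum_{x \in \cF} n(x_d, x_{d+1})$ is at least $|\cF| \cdot |\mathcal{C}| w_{k,d}/w_{k,d+1}$. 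Part (ii) is the truly delicate step, as no shifting or compression operation is known to preserve strong $P$-freeness in a grid while pushing the family towards its middle level; this appears to be the main reason the statement is left as a conjecture.
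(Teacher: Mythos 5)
This statement is left as a conjecture in the paper --- no proof is given there --- so there is no argument of the authors to compare yours against; the only question is whether your attempt closes the problem, and it does not. You are candid about this yourself: everything up to and including the double counting is sound (the maps $\phi_c$ are indeed strong embeddings of $[k]^d$ into $[k]^{d+1}$, and the identity $\sum_{x=(y,a,b)\in\cF} n(a,b)=\sum_{c\in\mathcal{C}}|\phi_c^{-1}(\cF)|$ is correct), but the argument only bounds a \emph{weighted} count of $\cF$, and converting that into a bound on $|\cF|$ with the right constant is exactly where the proof is missing. The analogous step in the proof of Theorem~\ref{vissza} is Claim~\ref{kisclaim}, which shows that the weight attached to a set $F$ (there, $s_{k,d,|F|+d}|F|!(n'-|F|)!$) is, up to a $1-o(1)$ factor, minimized at the middle rank, so that every member of the family contributes at least the ``middle'' weight. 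Your plan needs the corresponding statement that $n(a,b)/|\mathcal{C}|$ is at least $(1-o(1))\,w_{k,d}/w_{k,d+1}$ for \emph{every} cell $(a,b)$ (or that $\cF$ can be pushed into cells where this holds), and for chains of length $k$ in $[k]^2$ this is false for any natural choice of $\mathcal{C}$: corner cells such as $(1,k)$ lie on far fewer chains than central cells, and a $P$-free family is free to sit on exactly those cells. Since no compression preserving (strong or weak) $P$-freeness in the grid is available to rule this out, the inequality you would need in step (ii) is unproven, and the chain of implications breaks there.

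So the proposal should be read as a reasonable program plus an identification of the obstruction, not as a proof; the conjecture remains open. One further caution: even if the calibration in (i) were achieved, matching $n(a,b)$ to the profile $s_{k,d-1,r-a-b}$ at a single rank $r$ would handle only families concentrated near that rank, whereas the extremal $P$-free families in $[k]^{d+1}$ typically occupy several consecutive levels (compare the constructions in Proposition~\ref{doublechain}), so the reduction would in any case have to be carried out uniformly over a window of ranks, which compounds the difficulty in (ii).
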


The smallest poset for which Conjecture \ref{foconj} has not been verified is the diamond poset $D_2$. Theorem \ref{doublechain} determines $\pi_{2,D_2}$, and we have the following conjecture for larger values of $d$.

\begin{conjecture}
For any $d\ge 2$, we have $\pi_{d,D_2}=\pi^*_{d,D_2}=2+\frac{1}{d}$.
\end{conjecture}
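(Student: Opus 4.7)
The plan is to prove matching asymptotic bounds $La([k]^d, D_2), La^*([k]^d, D_2) = (2 + 1/d + o(1))\,w_{k,d}$, with the lower bound coming from an explicit three-level construction and the upper bound being the main obstacle. Since every strong copy of $D_2$ is a weak copy, weak $D_2$-free implies strong $D_2$-free, so $La([k]^d, D_2) \le La^*([k]^d, D_2)$; thus it suffices to establish the lower bound on $\pi_{d,D_2}$ and the upper bound on $\pi^*_{d,D_2}$.

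For the lower bound, set $\ell = \lfloor d(k+1)/2 \rfloor$ and take
\[
\cF = S_{k,d,\ell-1} \cup S_{k,d,\ell+1} \cup B, \qquad B = \Bigl\{b \in S_{k,d,\ell} : \sum_{i=1}^d i \cdot b_i \equiv 0 \pmod{d}\Bigr\}.
\]
Because $\cF$ occupies only three ranks, any weak copy of $D_2$ in $\cF$ must place its minimum $a$ at level $\ell-1$, its maximum $c$ at level $\ell+1$, and both middle elements at level $\ell$. Then $c - a$ has coordinate sum $2$, and the case $c - a = 2 e_i$ admits only one middle element, so $c - a = e_i + e_j$ with $i \ne j$, forcing the two middle elements to be $a + e_i$ and $a + e_j$. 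Their values of $\sum_\ell \ell \cdot x_\ell$ differ by $j - i$, which is nonzero modulo $d$, so $B$ contains at most one of them. Hence $\cF$ is weak (and therefore strong) $D_2$-free. Since the residue classes of $\sum_\ell \ell \cdot b_\ell \pmod{d}$ partition $S_{k,d,\ell}$ essentially equally and $s_{k,d,\ell \pm 1}/w_{k,d} \to 1$ as $k \to \infty$, we obtain $|\cF| \ge (2 + 1/d + o(1))\,w_{k,d}$.

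The upper bound is the main obstacle. A natural attempt is to generalize the 2D double-chain argument of Proposition \ref{doublechain}: in dimension $2$, any four consecutive increasing diagonals embed into the Burcsi--Nagy infinite double chain, on which a $D_2$-free subset has at most $|D_2| + h(D_2) - 2 = 5$ elements, giving $(5/2)k = (2 + 1/2)\,w_{k,2}$ after averaging. For $d \ge 3$, one could try to slice $[k]^d$ along one coordinate direction and proceed inductively, or set up a weighted LYM-type inequality $\sum_{f \in \cF} w(f) \le c_d$ with weights designed so that the modular construction above attains equality. The fundamental difficulty is that the tight $1/d$ excess arises from a very local, parity-style constraint --- no two middle-level elements differing by $e_j - e_i$ can both be present --- that does not obviously correspond to a clean chain or double-chain decomposition. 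Moreover, the Boolean limit of this conjecture, $\pi_{D_2} = 2$, is a famous long-standing open problem with best known upper bound still above $2.2$; handling the strong case is at least as hard as the weak, so a genuinely new idea exploiting the rigidity of the grid appears to be needed.
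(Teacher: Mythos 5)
This statement is a \emph{conjecture} in the paper, not a theorem: the authors prove only $\pi_{2,D_2}=5/2$ (the weak, $d=2$ case, via Proposition \ref{doublechain}) and leave everything else open, including even $\pi^*_{2,D_2}$. So there is no proof of the paper's to compare yours against, and --- as you say yourself --- your submission is not a proof either: you establish only the lower bound $\pi_{d,D_2}\ge 2+\frac{1}{d}$ and identify the upper bound $\pi^*_{d,D_2}\le 2+\frac{1}{d}$ as the real content. That gap is not a technicality. By Corollary \ref{gridboole2}, an upper bound $2+\frac{1}{d}$ on $La^*([k]^d,D_2)/w_{k,d}$ for $d\ge 6$ would already improve the best known bounds for the diamond problem in the Boolean lattice (currently above $2.2\binom{n}{\lfloor n/2\rfloor}$), so the missing half of your argument contains at least one famous open problem. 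Your diagnosis of why the $d=2$ method stalls is accurate: the double-chain bound of Burcsi and Nagy applies to unions of consecutive diagonals of $[k]^2$, whereas the analogous slices of $[k]^d$ for $d\ge 3$ are themselves $(d-1)$-dimensional grids, not double chains, so Proposition \ref{doublechain} has no direct higher-dimensional analogue.

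Your lower-bound construction does look correct and is the natural generalization of the paper's $d=2$ construction $\{x+y\in\{k,k+2\}\}\cup\{x+y=k+1,\ x\ \text{odd}\}$: two full levels plus a ``parity-restricted'' middle level, with the three-rank structure forcing $r(a)=\ell-1$, $r(b_1)=r(b_2)=\ell$, $r(c)=\ell+1$ and hence $\{b_1,b_2\}=\{a+e_i,a+e_j\}$ with $i\ne j$, which your congruence excludes. The one soft spot is the assertion that the residue classes of $\sum_i i\,b_i\pmod d$ partition $S_{k,d,\ell}$ essentially equally: the obvious symmetry (cyclic coordinate shift) changes the residue only by $\ell\bmod d$ and therefore does not give equidistribution when $\gcd(\ell,d)>1$. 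Fortunately you do not need equidistribution: your $D_2$-freeness argument works verbatim for $B_r=\{b\in S_{k,d,\ell}:\sum_i i\,b_i\equiv r\pmod d\}$ for any fixed $r$, so by pigeonhole you may take the most popular residue class, which has at least $s_{k,d,\ell}/d=(1-o(1))w_{k,d}/d$ elements. With that repair the lower bound $\pi_{d,D_2}\ge 2+\frac{1}{d}$ (and hence $\pi^*_{d,D_2}\ge 2+\frac{1}{d}$, since weak $D_2$-free implies strong $D_2$-free) stands; the conjecture itself remains open.
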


Concerning the connection of the permutation pattern $J_s$ and $\vee_s$, we state the following conjecture, the first part of which can already be found in \cite{BC}.

\begin{conjecture}\label{satconj}
For any $s\ge 3$, we have $sat(n,n,J_s)=ex(n,n,J_s)=sat^*([n]^2,\vee_s)=La^*([n]^2,\vee_s)$.
\end{conjecture}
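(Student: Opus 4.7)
My plan is to pin down all four quantities in Conjecture~\ref{satconj} to a common exact value, conjecturally $2(s-1)n - (s-1)^2$. Taking the first equality $sat(n,n,J_s) = ex(n,n,J_s)$ from~\cite{BC} as a starting point, the remaining task is to establish $ex(n,n,J_s) = sat^*([n]^2,\vee_s) = La^*([n]^2,\vee_s)$.

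I would first determine $La^*([n]^2,\vee_s)$ exactly. The lower bound $2(s-1)n - (s-1)^2$ is delivered by Theorem~\ref{grid}(iv). For the matching upper bound I would generalize the ``push to the corner'' argument of Theorem~\ref{grid}(iii): in a putative larger counterexample $F$, first reduce to the case that every row and column carries at least $s$ elements, then iteratively replace leftmost elements $(k_j,j)$ by $(k_j+1,j+1)$ while preserving the strong $\vee_s$-free property, eventually violating maximality of the coordinate sum. With the exact value of $La^*$ in hand, the identity $ex(n,n,J_s) = La^*([n]^2,\vee_s)$ would follow from the key lemma that any strong $\vee_s$ in the grid induces a $J_s$-pattern: given a minimum $a$ and $s$ pairwise incomparable maxes $b_1,\dots,b_s$ with first coordinates increasing, dropping a single well-chosen max and restricting to the $s \times s$ submatrix spanned by the remaining points produces $J_s$ exactly. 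Combined with the fact that the top/right strips construction is itself $J_s$-free, this pins both extremal numbers to the same value.

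For $sat^*([n]^2,\vee_s) = La^*([n]^2,\vee_s)$, the inequality $sat^* \le La^*$ is immediate since every extremal set is automatically saturated. The reverse inequality is the novel content; in the spirit of Theorem~\ref{satstar}(ii) for $\vee_2$, I would aim for a structural theorem asserting that any strong $\vee_s$-saturated $F$ must carry at least $s-1$ elements in almost every row and almost every column, yielding $|F| \ge 2(s-1)n - O(1)$ after summation.

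The main obstacle I anticipate is exactly this last step. For $\vee_2$ the uniqueness of the maximum in a saturated configuration drove a clean induction; for $s \ge 3$ the extremal shape has two orthogonal arms of width $s-1$, and a saturated set can in principle look very different. Ruling out a smaller saturated $F$ requires, in each potentially sparse row, locating a specific new grid point $p$ whose insertion preserves strong $\vee_s$-freeness, and this local-to-global exchange has to carefully track the ``incomparability budget'' of the $s$ simultaneous maxes---an accounting substantially more delicate than in the $\vee_2$ case.
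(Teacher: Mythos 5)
This statement is a \emph{conjecture}; the paper offers no proof of it, so there is no argument of the authors to compare yours against, and your text is in any case a research plan rather than a proof. Two of its load-bearing steps are not merely unproven but problematic. First, you take $sat(n,n,J_s)=ex(n,n,J_s)$ ``from \cite{BC} as a starting point,'' but the paper says only that the first part of Conjecture~\ref{satconj} \emph{can already be found} in \cite{BC} --- it is conjectured there, not proved. You are assuming an open problem as an ingredient.

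Second, and more seriously, your key lemma --- that every strong copy of $\vee_s$ in the grid yields a $J_s$ pattern after dropping one well-chosen maximal element --- is false. Take $a=(1,1)$ and $b_i=(i,\,s+1-i)$ for $i=1,\dots,s$. Each $b_i$ is strictly above $a$ and the $b_i$ are pairwise incomparable, so this is a strong $\vee_s$. A copy of $J_s$ requires $s$ points in distinct rows and distinct columns, one of which is strictly smaller \emph{in both coordinates} than the remaining $s-1$, which form an antichain. Here the only candidate for that corner is $a$, but only $b_2,\dots,b_{s-1}$ dominate $a$ strictly in both coordinates ($b_1$ shares its first coordinate with $a$ and $b_s$ its second), so at most $s-2$ points are available and no $J_s$ appears. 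Consequently ``$J_s$-avoiding $\Rightarrow$ strong $\vee_s$-free'' fails and the inequality $ex(n,n,J_s)\le La^*([n]^2,\vee_s)$ does not follow by your route; note that $J_s$ corresponds to one embedding of $\vee_{s-1}$ (on $s$ points), not of $\vee_s$, which is exactly why neither avoidance property implies the other and why the coincidence of the four extremal values is a genuine conjecture. The remaining steps --- generalizing the delicate $s=3$ exchange argument of Theorem~\ref{grid}(iii) to all $s$, and the lower bound $sat^*([n]^2,\vee_s)\ge 2(s-1)n-O(1)$ --- are stated as intentions, and you yourself identify the saturation lower bound as the main obstacle; as written, nothing in the proposal closes either gap.
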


We finish with two problems in larger dimensional grids.

\begin{conjecture}
For any $d\ge 3$ and $k_1,k_2,\dots,k_d$ we have $sat^*([k_1]\times [k_2]\times \dots \times [k_d], \vee_2)=\sum_{i=1}^dk_i-1$ and $La^*([k]^d,\vee_2)=(1+\frac{1}{d}+o(1))w_{k,d}$.
\end{conjecture}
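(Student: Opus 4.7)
My plan is to treat the saturation and extremal halves separately, since they call for rather different techniques. For $sat^*([k_1]\times\cdots\times[k_d],\vee_2)=\sum_{i=1}^d k_i-1$, the natural upper-bound construction generalises the $d=2$ chain from Theorem~\ref{satstar}\,(ii): a unit-step maximal chain from $(1,\dots,1)$ to $(k_1,\dots,k_d)$ has $\sum k_i-d+1$ elements, so one must attach $d-2$ extra ``spurs'' --- elements $y$ incomparable to the entire chain except a single element $c$ directly above $y$ --- in such a way that $\vee_2$-freeness is preserved but adding any further grid point creates a $\vee_2$. For the lower bound I would mimic the inductive argument from the proof of Theorem~\ref{satstar}\,(ii). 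The top $(k_1,\dots,k_d)$ must belong to any $\vee_2$-saturated family $F$; since $F$ is $\vee_2$-free, the maximal elements $a^{(1)},\dots,a^{(h)}$ of $F\setminus\{(k_1,\dots,k_d)\}$ are pairwise incomparable and partition $F\setminus\{(k_1,\dots,k_d)\}$ into disjoint cones $F_i=F\cap[\mathbf 1,a^{(i)}]$, each of which is essentially $\vee_2$-saturated inside the sub-box $[\mathbf 1,a^{(i)}]$. Applying induction on $\sum k_i$ to each cone and bookkeeping the overlap on low-dimensional faces should yield the precise constant $\sum k_i-1$.

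For $La^*([k]^d,\vee_2)=(1+1/d+o(1))w_{k,d}$, the lower bound construction I propose is the middle antichain $M=S_{k,d,\lfloor(k+1)d/2\rfloor}$ together with a set $S$ chosen from the level immediately above so that each $x\in M$ lies below at most one element of $S$. The bipartite covering graph between $M$ and the next rank is essentially $d$-regular away from the boundary, so a straightforward double counting yields a greedy selection of size $|S|=(1-o(1))w_{k,d}/d$, giving a $\vee_2$-free family of size $(1+1/d+o(1))w_{k,d}$. For the matching upper bound, I would either invoke Theorem~\ref{vissza} together with the Boolean bound $La^*(n,\vee_2)\le(1+o(1))\binom{n}{\lfloor n/2\rfloor}$, or argue directly by chain averaging: each maximal chain of $[k]^d$ intersects a $\vee_2$-free family in a set whose ``upward'' portion is linearly ordered, so on average each chain contributes at most $(1+1/d)$ times its fair share to the middle layer, and averaging over all maximal chains produces the bound.

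The main obstacle is twofold. On the saturation side, a direct check shows that already in $[2]^3$ the four-element maximal chain $(1,1,1)<(2,1,1)<(2,2,1)<(2,2,2)$ is $\vee_2$-saturated, so if the stated constant $\sum k_i-1$ is to hold the extra $d-2$ spurs must be shown to exist once $\min_i k_i$ is large enough, and the sharp lower bound must simultaneously rule out short chain-type saturating families --- locating the spurs and proving the matching lower bound is the real work. On the extremal side, the delicate step is extracting the precise $+w_{k,d}/d$ term in the upper bound: a naive LYM-type argument delivers only $w_{k,d}+o(w_{k,d})$, so the chain-averaging step must quantitatively exploit the structural rigidity that every element of a $\vee_2$-free family has at most one cover above it in the family, plausibly via an entropy or profile argument that measures how much the chain intersection profile can deviate from being concentrated on a single rank.
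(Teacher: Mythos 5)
This statement is one of the paper's open \emph{conjectures}; the paper gives no proof of it, so there is no argument of the authors to compare yours against, and what you have written is a research plan rather than a proof --- in both halves you name the decisive step and leave it undone. More importantly, the saturation half of the statement is false as written, and your own computation already shows why. The four-element chain in $[2]^3$ is not an isolated anomaly: in any $[k_1]\times\cdots\times[k_d]$, take a saturated chain $C$ from $(1,\dots,1)$ to $(k_1,\dots,k_d)$, with $\sum_i k_i-d+1$ elements. For any $x\notin C$, let $c$ be the largest element of $C$ with $c\le x$ and let $c^+$ be its successor on $C$; then $c<x$, $c<c^+$, and $c^+$ is incomparable to $x$ (it is not below $x$ by maximality of $c$, and $x$ cannot lie strictly between $c$ and its cover $c^+$), so $\{c,x,c^+\}$ is a strong copy of $\vee_2$. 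Hence $C$ is already strong $\vee_2$-saturated and $sat^*\le\sum_i k_i-d+1<\sum_i k_i-1$ for every $d\ge 3$. Your plan of attaching $d-2$ ``spurs'' to reach $\sum_i k_i-1$ is therefore doomed: no lower bound argument can force those extra elements into a minimum saturated family, because the bare chain is a smaller one. The conjectured constant appears to be a $d=2$ artifact (there $\sum k_i-d+1=\sum k_i-1$), and the first task is to correct the statement, not to prove it.

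The extremal half also contains a step that would fail. Theorem \ref{vissza} (like Theorem \ref{svejc} and Corollaries \ref{gridboole}, \ref{gridboole2}) transfers an upper bound \emph{on the grid} into an upper bound \emph{on} $2^{[n]}$; there is no tool in the paper going in the direction you need, and there cannot be one here: a reverse transfer of $La^*(n,\vee_2)\le(1+o(1))\binom{n}{\lfloor n/2\rfloor}$ would yield $La^*([k]^d,\vee_2)\le(1+o(1))w_{k,d}$, contradicting the lower bound $(1+1/d-o(1))w_{k,d}$ you construct in the same paragraph. That leaves only your ``chain averaging'' idea for the upper bound, which is precisely the part you concede is delicate and do not carry out; a LYM-type count cannot detect the $+w_{k,d}/d$ term. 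The two lower-bound constructions (middle level plus a packed sublevel, and the inductive cone decomposition) are reasonable starting points, but even there the $d\ge 3$ induction is not routine: the sub-boxes below the maximal elements of $F\setminus\{(k_1,\dots,k_d)\}$ no longer tile the grid as the two rectangles do in the $d=2$ proof of Theorem \ref{satstar}~(ii), so ``bookkeeping the overlap on low-dimensional faces'' is where the entire difficulty lives.
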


\begin{problem}
Determine the possible orders of magnitude of $sat^*([k]^d,P)$. 
\end{problem}

Observe that the dichotomy part of the proof of Theorem \ref{strongsat} stays valid, showing that for any poset $P$, we have that $sat^*([k]^d,P)$ is either constant or at least linear.

\vskip 0.5truecm

\textbf{Acknowledgement.} Research partially sponsored by the National Research, Development and Innovation Office -- NKFIH under the grants K 132696, KH 130371, PD 137779, SNN 129364, FK 132060, and KKP-133819. Research of Vizer was supported by the J\'anos Bolyai Research Fellowship of the Hungarian Academy of Sciences and by the New National Excellence Program under the grant number \'UNKP-21-5-BME-361. Patk\'os was partially supported by the Ministry of Education and Science of the Russian Federation in the framework of MegaGrant no 075-15-2019-1926. Nagy was supported by the the J\'anos Bolyai Research Fellowship of the Hungarian Academy of Sciences.


\begin{thebibliography}{99}
\bibitem{And}
 I. Anderson, \textit{Combinatorics of Finite Sets}, Oxford University Press, 1987
\bibitem{Be}
B.A. Berendsohn, Matrix patterns with bounded saturation function, arXiv preprint arXiv:2012.14717. 2020.
\bibitem{BC}
R.A. Brualdi, L. Cao, Pattern-Avoiding (0, 1)-Matrices. arXiv preprint arXiv:2005.00379. 2020.
\bibitem{B}
B. Bukh, Set families with a forbidden subposet. The Electronic Journal of Combinatorics, 16 (2009) R142.
\bibitem{BN}
P. Burcsi, D.T. Nagy,  The method of double chains for largest families with excluded subposets. Electronic Journal of Graph Theory and Applications (EJGTA), 1(1) (2013), 40--49.
\bibitem{DKS}
A. De Bonis, G.O.H. Katona, K.J. Swanepoel, Largest family without $A\cup  B\subseteq  C\cap D$. Journal of Combinatorial Theory, Series A, 111(2) (2005), 331--336.
\bibitem{E}
P. Erd\H os, On a lemma of Littlewood and Offord. Bulletin of the American Mathematical Society, 51(12) (1945), 898--902.
\bibitem{FK}
R. Fulek, B. Keszegh, Saturation problems about forbidden 0-1 submatrices. arXiv preprint arXiv:2010.08256. 2020.
\bibitem{G}
J. Geneson, Almost all permutation matrices have bounded saturation functions, arXiv preprint arxiv:2012.14150. 2020
\bibitem{GP2}
D. Gerbner, B. Patk\'os, $l$-chain profile vectors. SIAM Journal on Discrete Mathematics, 22(1) (2008), 185--193.
\bibitem{GP}
D. Gerbner, B. Patk\'os, \textit{Extremal Finite Set Theory}, CPC Press, 2019.
\bibitem{GL}
J.L. Griggs,  W.T. Li, Progress on poset-free families of subsets. In \textit{Recent trends in combinatorics} (pp. 317--338). (2016) Springer, Cham.
\bibitem{GriLu}
J.R. Griggs, L. Lu, On families of subsets with a forbidden subposet. Combinatorics, Probability and Computing, 18(5) (2009), 731--748.
\bibitem{KT}
G.O.H. Katona, T. Tarj\'an. Extremal problems with excluded subgraphs in the $n$-cube. In \textit{Graph Theory}, pp. 84--93. Springer, Berlin, Heidelberg, 1983.
\bibitem{KLMPP}
B. Keszegh, N. Lemons, R.R. Martin, D. P\'alv\"olgyi, B. Patkós, Induced and non-induced poset saturation problems.  Journal of Combinatorial Theory A, 184 (2021), 105497
\bibitem{KM}
M. Klazar, A. Marcus, Extensions of the linear bound in the Füredi–Hajnal conjecture. Advances in Applied Mathematics, 38(2) (2007), 258--266.
\bibitem{MT}
A. Marcus, G. Tardos, Excluded permutation matrices and the Stanley–Wilf conjecture. Journal of Combinatorial Theory, Series A, 107(1) (2004), 153--160.
\bibitem{M}
A. Méroueh, A LYM inequality for induced posets. Journal of Combinatorial Theory, Series A, 155 (2018), 398--417.
\bibitem{MP}
A. Methuku, D. P\'alv\"olgyi, Forbidden Hypermatrices Imply General Bounds on Induced Forbidden Subposet Problems. Combinatorics, Probability and Computing 26.4 (2017): 593--602.
\bibitem{STW}
B. Sudakov, I. Tomon, A. Zs. Wagner, Uniform chain decompositions and applications. arXiv preprint arXiv:1911.09533 (2019).
\bibitem{T}
I. Tomon, Forbidden induced subposets in the grid. arXiv preprint arXiv:1705.09551 (2017).
\bibitem{T2}
I. Tomon, Forbidden induced subposets of given height. Journal of Combinatorial Theory, Series A, 161 (2019), 537--562.
\end{thebibliography}
\end{document}